\newtheorem{lemma}{Lemma}[section]
\newtheorem{step}{Step}[lemma]
\newtheorem{theorem}{Theorem}[section]
\newtheorem{claim}{Claim}[theorem]
\newtheorem{problem}{Problem}
\theoremstyle{definition}
\newtheorem{example}{Example}[section]
\newtheorem{remark}{Remark}[section]
\newtheorem{definition}{Definition}[section]
\newtheorem{notation}{Notation}[section]
\DeclareMathOperator{\trdeg}{trdeg}
\DeclareMathOperator{\rank}{rank}
\DeclareMathOperator{\DS}{\Delta{\hbox{-}}\Sigma}
\newcommand{\myitem}[1]{%
\item[#1]\protected@edef\@currentlabel{#1}%
}
\def\subjclass#1{{\renewcommand{\thefootnote}{}%
\footnote{\emph{Mathematics Subject Classification (2010):} #1}}}
\title{A Primitive Element Theorem for Fields with\\ Commuting Derivations and Automorphisms}
\author{Gleb Pogudin\footnote{{pogudin.gleb@gmail.com}, Courant Institute of Mathematical Sciences, New York University, New York,  USA}}
\date{}
\begin{document}

\maketitle

\begin{abstract}
    We establish a Primitive Element Theorem for fields equipped with several commuting operators such that each of the operators is either a derivation or an automorphism.
    More precisely, we show that for every extension $F \subset E$ of such fields of zero characteristic such that 
    \begin{itemize}
        \item $E$ is generated over $F$ by finitely many elements using the field operations and the operators,
        \item every element of $E$ satisfies a nontrivial equation with coefficient in $F$ involving the field operations and the operators,
        \item the action of the operators on $E$ is irredundant
    \end{itemize}
    there exists an element $a \in E$ such that $E$ is generated over $F$ by $a$ using the field operations and the operators.
    
    This result generalizes the Primitive Element Theorems by Kolchin and Cohn in two directions simultaneously: we allow any numbers of derivations and automorphisms and do not impose any restrictions on the base field $F$.
    
    \noindent
    \textbf{\textit{Keywords:}} primitive element, differential field, difference field, fields with operators.
\end{abstract}

\subjclass{Primary: 12H05, 12H05; Secondary: 12F99.}

\section{Introduction}

\subsection{Overview and prior results}

 The Primitive Element Theorem~\cite[\S 40]{vdW} is a fundamental result in the field theory that says that for every separable finitely generated algebraic extension of fields $F \subset E$, there exists $a \in E$ such that $E = F(a)$.
Apart from its theoretical importance, it is also one of the main tools for computing with algebraic extensions~\cite[\S 5]{Loos}.

\paragraph{Fields with commuting derivations.} Consider an extension $\mathbb{C} \subset \mathbb{C}(x, e^x)$.
The field $\mathbb{C}(x, e^x)$ cannot be generated over $\mathbb{C}$ by one element because the functions $x$ and $e^x$ are algebraically independent.
However, the formulas
\[
x = \alpha - \alpha' + 1 \quad \text{ and } \quad e^x = \alpha' - 1, \text{ where } \alpha := x + e^x
\]
imply that $\mathbb{C}(x, e^x)$ is generated over $\mathbb{C}$ by $\alpha$ if we allow taking derivatives as well as the field operations.

Fields equipped with commuting derivations are central objects in the algebraic studies of differential equations initiated by Ritt~\cite{Ritt} and Kolchin~\citep{KolchinBook}.
In this setting, Kolchin proved the following analogue of the Primitive Element Theorem:

\begin{theorem}[Kolchin, {{\cite[\S 4]{primitiveKolchin}}}]\label{thm:Kolchin}
  Let $F \subset E$ be an extension of fields of zero characteristic and $E$ be equipped with commuting derivations $\delta_1, \ldots, \delta_s$ such that $F$ is closed under the derivations. 
  Assume that
  \begin{enumerate}[itemsep=1pt, topsep=1pt, label=(\arabic*)]
      \item there exist $a_1, \ldots, a_n \in E$ such that
      \[
      E = F\bigl( \delta_1^{\alpha_1}\ldots \delta_s^{\alpha_s} a_j \mid 1 \leqslant j \leqslant n,\; \alpha_1, \ldots, \alpha_s \in \mathbb{Z}_{\geqslant 0} \bigr).
      \]
      \item for every $1 \leqslant j \leqslant n$, $a_j$ satisfies a nonzero polynomial PDE over $F$, that is, the elements of $\{\delta_1^{\alpha_1}\ldots \delta_s^{\alpha_s} a_j \mid \alpha_1, \ldots, \alpha_s \in \mathbb{Z}_{\geqslant 0}\}$ are algebraically dependent over $F$;
      \item\label{cond:Kolchin} there exist $b_1, \ldots, b_s \in F$ with nondegenerate Jacobian, that is, $\det (\delta_i b_j)_{i, j = 1}^s \neq 0$.
  \end{enumerate}
  Then there exists $a \in E$ such that $E = F(\delta_1^{\alpha_1}\ldots \delta_s^{\alpha_s} a \mid \alpha_1, \ldots, \alpha_s \in \mathbb{Z}_{\geqslant 0})$.
\end{theorem}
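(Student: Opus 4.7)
The plan is to follow the classical induction strategy for primitive element theorems: reduce to the two-generator case by induction on $n$, then construct a primitive element of the form $c = a_1 + \lambda a_2$ for a suitably chosen $\lambda \in F$. The induction step is routine: if $F\langle a_1, a_2\rangle = F\langle c_{12}\rangle$ differentially, then $F\langle a_1,\ldots,a_n\rangle = F\langle c_{12}, a_3,\ldots,a_n\rangle$, and the hypothesis that each generator satisfies a nontrivial PDE over the base is preserved since $c_{12}$ is a polynomial expression in iterated derivatives of $a_1$ and $a_2$. So it suffices to handle $n = 2$.

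For the two-generator step, write $a = a_1$, $b = a_2$. Take a nonzero differential polynomial $P \in F\langle b\rangle\{y\}$ of minimal order and degree vanishing at $a$, and attempt to recover $a$ and $b$ from $c = a + \lambda b$ by differentiating $c$ and exploiting $P$. Concretely, substituting $a = c - \lambda b$ into $P$ and applying enough monomials $\delta_1^{\alpha_1}\cdots\delta_s^{\alpha_s}$ to the result should yield a finite system in the iterated derivatives $\delta^{\beta} b$ whose solvability for $b$ in terms of $c$ (inside $F\langle c\rangle$) is governed by the non-vanishing of a Wronskian-type differential polynomial $W$ in $\lambda$ and its derivatives. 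Condition~\ref{cond:Kolchin} is then used as a source of ``generic'' $\lambda$: since $(\delta_i b_j)$ is nondegenerate, the $b_j$ behave like algebraically independent ``coordinate functions'' under the derivations $\delta_i$, and one expects that a suitable polynomial expression $\lambda = Q(b_1,\ldots,b_s)$ with integer coefficients will avoid the zero locus of $W$.

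The main obstacle I anticipate is twofold. First, identifying the precise Wronskian $W$ whose non-vanishing corresponds to ``$c$ generates $E$ over $F$'' requires careful bookkeeping with differential rankings and resultants, and is considerably more delicate in the partial case of several commuting $\delta_i$ than in Kolchin's original single-derivation setting, because the order of $W$ and the required number of differentiations must be controlled simultaneously in all $s$ directions. Second, converting ``$W \not\equiv 0$ as a differential polynomial'' into the actual existence of $\lambda \in F$ is precisely where condition~\ref{cond:Kolchin} must enter essentially: without some Jacobian-type hypothesis (for instance if $F$ consisted entirely of constants), no primitive element can exist, so the argument has to translate the nondegeneracy of $(\delta_i b_j)$ into the statement that evaluating $\lambda$ at a sufficiently rich polynomial in $b_1,\ldots,b_s$ leaves $W$ nonzero. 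I expect this specialization step, rather than the Wronskian computation itself, to be the technical heart of the proof.
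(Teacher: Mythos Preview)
The paper does not give its own direct proof of Theorem~\ref{thm:Kolchin}: it is quoted as Kolchin's result and then recovered as the special case $\Sigma = \varnothing$ of the main Theorem~\ref{thm:main} (see Remark~\ref{rem:CohnKolchin}). So the relevant comparison is between your classical plan and the paper's proof of Theorem~\ref{thm:main}.

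Your plan is precisely Kolchin's original strategy: reduce to two generators and search for a primitive element among $F$-linear combinations $a_1 + \lambda a_2$, using condition~\ref{cond:Kolchin} to supply a $\lambda \in F$ with sufficiently generic derivatives. This is a valid route to Theorem~\ref{thm:Kolchin} as stated, and you have correctly located where condition~\ref{cond:Kolchin} enters. The paper explicitly discusses this strategy in its outline of the approach and notes its limitation: Examples~\ref{ex:not_linear_comb_differential} and~\ref{ex:not_linear_comb_difference} show that when $F$ is a field of constants, no $F$-linear combination of the generators need be primitive, so the linear-combination ansatz is intrinsically tied to the strong hypothesis on $F$.

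The paper's approach is genuinely different. Instead of a linear combination, the primitive element is built as a truncated multivariate Taylor polynomial in the generators and their derivatives (formulas~\eqref{eq:primitive_differential} and~\eqref{eq:primitive_general}). The polynomial relations between this candidate and the original generators are then reinterpreted as a linear PDE with constant coefficients satisfied by a truncated exponential power series, and a uniqueness lemma for such PDEs (Lemmas~\ref{lem:operator} and~\ref{lem:core}) forces the generators to lie in the subfield generated by the candidate. The payoff is that the Jacobian hypothesis is needed only on $E$, not on $F$, yielding the strictly stronger Theorem~\ref{thm:main} and resolving Problem~\ref{prob:differential}. Your approach cannot reach that strengthening, since the specialization step you isolate genuinely requires $\lambda \in F$ with prescribed derivatives; but for Theorem~\ref{thm:Kolchin} as stated, your outline is sound and closer to Kolchin's own argument than to the paper's.
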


Theorem~\ref{thm:Kolchin} and its improvements~\cite{Seidenberg,primitive} have been used, for example, in algorithms and effective bounds for differential-algebraic equations~\cite{Luroth,CluzeauHubert, CluzeauHubert2, MOT16,FreitagLi},  Galois theory of differential and difference equations \cite{HardouinSinger,BP2016_model}, model theory of differential fields~\cite{Wood74, MarkerModel}, control theory~\cite{BellLu,Fliess}, and for connecting algebraic and analytic approaches to differential-algebraic equations~\cite{SeidenbergEmbedding, SeidenbergEmbedding2, S78}.

In our earlier paper~\cite{primitive}, for the case of one derivation ($m = 1$), condition~\ref{cond:Kolchin} has been relaxed to the condition that $E$ contains a nonconstant, i.e. that the derivation is not zero.
Unlike Theorem~\ref{thm:Kolchin}, this refined statement is applicable, for example, to the extension $\mathbb{C} \subset \mathbb{C}(x, e^x)$ discussed above and to extensions of the form $\mathbb{C} \subset \mathbb{C}(X)$, where $\mathbb{C}$ is the field of rational functions on an irreducible algebraic variety $X$ and the derivation on $\mathbb{C}(X)$ is induced by a vector field on $X$.
It is an open problem whether such relaxation is true for the case of several commuting derivations.

\begin{problem}\label{prob:differential}
    Prove or disprove that Theorem~\ref{thm:Kolchin} is still true if condition~\ref{cond:Kolchin} is replaced with \begin{itemize}[itemsep=1pt, topsep=1pt]
      \item[(3')] there exist $b_1, \ldots, b_s \in E$ (not $F$) with nondegenerate Jacobian.
  \end{itemize}  
\end{problem}

\paragraph{Fields with an automorphism.} Consider the extension $\mathbb{C} \subset \mathbb{C}(x, \Gamma(x))$, where $\Gamma(x)$ is the gamma function.
Since $x$ and $\Gamma(x)$ are algebraically independent, $\mathbb{C}(x, \Gamma(x))$ cannot be generated over $\mathbb{C}$ by one element.
The difference equation for the gamma function $\Gamma(x + 1) = x\Gamma(x)$ implies that the shift of the argument $f(x) \mapsto f(x + 1)$ induces an automorphism on $\mathbb{C}(x, \Gamma(x))$.
Then the formula $x = \frac{\Gamma(x + 1)}{\Gamma(x)}$ implies that $\mathbb{C}(x, \Gamma(x))$ is generated over $\mathbb{C}$ by $\Gamma(x)$ if we allow integer shifts of the argument as well as the field operations.

Algebraic theory of nonlinear difference equations founded by Ritt and Cohn extensively uses fields equipped with an automorphism.
Cohn established the following version of the Primitive Element Theorem (to keep the presentation simple, we restrict ourselves to the zero characteristic case):

\begin{theorem}[Cohn~{\cite[p. 203, Theorem~III]{Cohn}}]\label{thm:Cohn}
  Let $F \subset E$ be an extension of fields of zero characteristic and $\sigma\colon E \to E$ be an automorphism such that $\sigma(F) \subset F$. Assume that
  \begin{enumerate}[itemsep=1pt, topsep=1pt, label=(\arabic*)]
      \item there exist $a_1, \ldots, a_n \in E$ such that $E = F(\sigma^i (a_j) \;|\; i \in \mathbb{Z},\; 1 \leqslant j \leqslant n)$;
      \item for every $1 \leqslant j\leqslant n$, $a_j$ satisfies a nonzero difference-algebraic equation over $F$, that is, the elements of $\{\sigma^i (a_j) \;|\; i \in \mathbb{Z}\}$ are algebraically dependent over $F$;
      \item\label{cond:Cohn_nonperiodic} $\sigma$ has infinite order on $F$, that is, there is no integer $k \geqslant 1$ such that, for every $a \in F$, $\sigma^k(a) = a$.
  \end{enumerate}
  Then there exists $a \in E$ such that $E = F(\sigma^j(a) \mid j \in \mathbb{Z})$.
\end{theorem}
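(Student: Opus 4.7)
My approach would follow the classical template, combining an inductive reduction to two generators with a genericity argument based on the infinite-order hypothesis on $\sigma|_F$. First, I would induct on $n$: the case $n=1$ is trivial with $a := a_1$, and for $n \geq 2$ I would apply the induction hypothesis to the subextension $F \subset F\langle a_1, \ldots, a_{n-1}\rangle_\sigma$ --- conditions~(1) and~(2) are inherited by construction, and condition~(3) depends only on $\sigma|_F$ so it is unchanged --- obtaining a primitive element $a'$, whence $E = F\langle a', a_n\rangle_\sigma$ and it suffices to treat $n = 2$.

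For the two-generator case $E = F\langle a,b\rangle_\sigma$, I would try $c_\lambda := a + \lambda b$ with $\lambda \in F$ to be chosen. Applying $\sigma^i$ for $i \in \mathbb{Z}$ yields the system
\[
\sigma^i(c_\lambda) = \sigma^i(a) + \sigma^i(\lambda)\,\sigma^i(b).
\]
Since $a, \sigma(a), \sigma^2(a), \ldots$ satisfy a nontrivial algebraic relation over $F$ coming from the difference-algebraic equation for $a$, and likewise for $b$, the plan is to show that for a well-chosen $\lambda$ these constraints together with the displayed linear equations allow one to recover both $a$ and $b$ from the $\sigma$-orbit of $c_\lambda$ and elements of $F$, proving $E = F\langle c_\lambda\rangle_\sigma$.

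The role of hypothesis~(3) is precisely to make this elimination possible: one needs the coefficients $\sigma^i(\lambda)$ to be sufficiently varied --- pairwise distinct for enough $i$, and outside a finite algebraic ``bad set'' arising in the elimination. The infinite-order condition supplies exactly this flexibility, for if every element of $F$ had $\sigma$-orbit of length at most $N$ then $\sigma^{N!}$ would fix $F$, contradicting~(3); hence for any prescribed $N$ there exist $\lambda \in F$ whose $\sigma$-orbit exceeds length $N$, and among such elements a standard specialization argument lets one avoid the bad set.

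The principal difficulty will be making the elimination precise and bounding the bad set using only the infinite-order hypothesis, rather than a stronger transitivity-type assumption. Classically one does this by introducing the \emph{limit degree} of a difference extension and showing that for a good $\lambda$ the limit degree of $E$ over $F\langle c_\lambda\rangle_\sigma$ equals~$1$; that equality then forces $F\langle c_\lambda\rangle_\sigma = E$ and yields the primitive element.
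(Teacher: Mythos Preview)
The paper does not give a standalone proof of this theorem: it is quoted as a prior result with a citation to Cohn. The paper's main result, Theorem~\ref{thm:main}, does recover it as the special case $s=0$, $t=1$ (Remark~\ref{rem:CohnKolchin}), so the relevant comparison is with the proof of Theorem~\ref{thm:main} specialized to a single automorphism.

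Your outline is the classical route that the paper itself attributes to Cohn: reduce inductively to two generators and seek a primitive element of the form $a+\lambda b$ with $\lambda\in F$, exploiting hypothesis~(3) to make the orbit $\{\sigma^i(\lambda)\}$ long enough for the elimination (via limit degree) to go through. This is a correct high-level plan, though you rightly flag that the hard step---bounding the bad locus and carrying out the limit-degree computation---is left as a black box. The paper's method is genuinely different. Specializing~\eqref{eq:primitive_general} to $s=0$, the candidate primitive element is $\theta_{-1}a_{\ell+1}+\sum_{j=0}^{N}\tfrac{\theta_j}{j!}\,b_\ell^{\,j}$, a polynomial in $b_\ell$ with \emph{rational} coefficients rather than an $F$-linear combination of the generators. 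These monomials are reinterpreted as Taylor coefficients of $e^{b_\ell x}$; differentiating a minimal algebraic relation for the candidate with respect to the parameters $\theta_j$ assembles a truncated linear relation among the shifts $\sigma^{\beta}e^{b_\ell x}$, and Lemmas~\ref{lem:operator} and~\ref{lem:core} then force $b_\ell$ into the subfield generated by the candidate. The payoff is that the argument never needs $\sigma$ to move $F$: only $m$-nonperiodicity of $b_\ell$ in $E$ is used, which is why Theorem~\ref{thm:main} solves Problem~\ref{prob:difference}. Your approach is more elementary and closer to the algebraic primitive element theorem, but it genuinely relies on the coefficients $\sigma^i(\lambda)$ varying inside $F$; Example~\ref{ex:not_linear_comb_difference} shows that when $\sigma|_F=\operatorname{id}$ no $F$-linear combination of the generators can be primitive, so the two methods are not interchangeable.
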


Theorem~\ref{thm:Cohn} has been used, for example, in model theory of fields with an automorphism~\cite{CH99} and for proving approximation theorem for difference equations~\cite{Belair}.
However, Theorem~\ref{thm:Cohn} is not applicable to our example $\mathbb{C} \subset \mathbb{C}(x, \Gamma(x))$ because the automorphism acts trivially on the base field $\mathbb{C}$.

\begin{problem}\label{prob:difference}
  Prove or disprove that Theorem~\ref{thm:Cohn} is still true if condition~\ref{cond:Cohn_nonperiodic} is replaced with \begin{itemize}[itemsep=1pt, topsep=1pt]
      \item[(3')] $\sigma$ has inifinite order on $E$ (not $F$).
  \end{itemize}
\end{problem}

Positive solution (i.e., ``prove'') to Problem~\ref{prob:difference} would have, for example, the following application.
Let $\mathbb{C} \subset \mathbb{C}(X)$, where $\mathbb{C}(X)$ is the field of rational functions on an irreducible algebraic variety $X$.
Consider an automophism $\alpha$ of $X$ of infinite order. 
The dual $\alpha^\sharp \colon \mathbb{C}(X) \to \mathbb{C}(X)$ is an automorphism of $\mathbb{C}(X)$ of infinite order.
Then the positive solution to Problem~\ref{prob:difference} would imply that there exists $f \in \mathbb{C}(X)$ such that the orbit of $f$ under $\alpha^\sharp$ generates $\mathbb{C}(X)$ over~$\mathbb{C}$.
In particular, our main result, Theorem~\ref{thm:main}, implies the existence of such $f$.

%%%%%%%%%%%%%%%%%%

\paragraph{More general cases.}
Although fields with several commuting automorphisms and fields with several derivations and automorphisms commuting with each other have been studied from the standpoints of algebra~\cite{Cohn, LevinBook}, model theory~\cite{Sanchez16,BM07,C15}, and symbolic computation~\cite{Levin, GHYZ09}, we are not aware of any analogues of the Primitive Element Theorem for such fields.

\begin{problem}\label{prob:mixed}
  Derive an analogue of the Primitive Element Theorem for fields with several commuting derivations and automorphisms.
\end{problem}

Another common generalization of fields equipped with a derivations and fields equipped with an automorphism is the theory of fields with free operators introduced in~\cite{MS10,MS14} (see also~\cite{K13,BHMP17}).
We are not aware of any analogues of the Primitive Element Theorem for such fields.

%%%%%%%%%%%%%%%%%%%

\subsection{Our contribution}

Our main result, Theorem~\ref{thm:main}, generalizes Theorems~\ref{thm:Kolchin} and~\ref{thm:Cohn} in two directions simultaneously
\begin{itemize}
    \item We establish a Primitive Element Theorem for fields equipped with $s$ derivations and $t$ automorphisms such that all these operators commute.
    This solves Problem~\ref{prob:mixed}.
    
    \item We remove all the assumptions on the base field $F$ other than being closed under the operators.
    Instead of this, we require the derivations and automorphisms to be ``independent'' on the bigger field $E$.
    In particular, this solves Problems~\ref{prob:differential} and~\ref{prob:difference} (see Remark~\ref{rem:CohnKolchin}).
    We show (see Example~\ref{ex:cannot_drop}) that the condition on $E$ cannot be removed.
\end{itemize}

%%%%%%%%%

\subsection{Outline of the approach}

One of the main challenges in proving such a general version of the Primitive Element Theorem is to find an appropriate general form of a primitive element.
    Proofs of Kolchin~\cite{primitiveKolchin} and Cohn~\cite{Cohn} follow the same strategy as the standard proof of the algebraic Primitive Element Theorem, namely they construct a primitive element of an extension $F \subset E$ as a \emph{$F$-linear combination} of the original generators.

However, Examples~\ref{ex:not_linear_comb_differential} and~\ref{ex:not_linear_comb_difference} show that if $F$ is a constant field, then it can happen that none of the linear combinations of the original generators is a primitive element even in the case of only one operator.
To strengthen Kolchin's theorem (Theorem~\ref{thm:Kolchin}) in the case of one derivation, in our earlier paper~\cite{primitive}, we used an involved two-stage construction of a primitive element as a \emph{polynomial in the original generators}.

In this paper, we approach finding a primitive element from the perspective of \emph{the Taylor series expansion}.
More precisely, a primitive element is constructed as a truncated multivariate Taylor series in the original generators and their derivatives (see formulas~\eqref{eq:primitive_differential} and~\eqref{eq:primitive_general}).
The order of truncation is derived based on the Kolchin polynomial of the extension.
Formally, a truncated Taylor series is simply a polynomial written in a special form with factorials in the denominators, so the search space for a primitive element is almost the same as in~\cite{primitive}.
However, this representation turns out to be a key to interpreting a polynomial system that relates the original generators and a potential primitive element in terms of solutions some special system of linear PDEs.
This interpretation allows us to show that the original generators can be expressed in terms of the potential primitive element (see Lemmas~\ref{lem:core_differential} and~\ref{lem:core}), so this element is indeed primitive.
As was pointed out to me by Jonathan Kirby, one can view the difference between the representations of a primitive element in~\cite{primitive} and in the present paper as the difference between usual generating series and exponential generating series.

We refer a reader who wants to see more details but does not want to read the whole proof to Section~\ref{sec:differential_proof}.
This section contains a proof of the main theorem for differential fields (i.e., fields equipped with a derivation).
It is much shorter than the main proof but  demonstrates  some of the key techniques we developed.

%%%%%%%%%%%%%%%%%%%

\subsection{Structure of the paper}

The rest of the paper is organized as follows. Section~\ref{sec:defs_mainresult} contains definitions used in the statement of the main result and the main result, Theorem~\ref{thm:main}.
Section~\ref{sec:examples} contains examples that illustrate the main result.
Section~\ref{sec:def_proofs} contains the definitions and notation used in the proofs.
Section~\ref{sec:differential_proof} contains the proof of the main result in the special case of fields with a derivation. This proof demonstrates some of the key ingredients of the proof of Theorem~\ref{thm:main} in a simpler setting allowing the reader to understand of the general approach without going into technical details.
Section~\ref{sec:proof_main} contains the proof of the main result.
For the convenience of the reader, the corresponding lemmas in Sections~\ref{sec:differential_proof} and~\ref{sec:proof_main} are cross-referenced.

%%%%%%%%%%%%%%%%%%%%%%%%

\section{Definitions and the Main Result}
\label{sec:defs_mainresult}

All fields are assumed to be of zero characteristic.

\begin{definition}[$\DS$-rings]
  Let $\Delta = \{\delta_1, \ldots, \delta_s\}$ and $\Sigma = \{\sigma_1, \ldots, \sigma_t\}$ be finite sets of symbols.
  We say that a commutative ring $R$ is a \emph{$\DS$-ring} if 
  \begin{enumerate}[itemsep=1pt, topsep=1pt]
      \item $\delta_1, \ldots, \delta_s$ act on $R$ as derivations, that is, $\delta_i(a + b) = \delta_i(a) + \delta_i(b)$ and $\delta_i(ab) = \delta_i(a)b + a\delta_i(b)$ for every $1 \leqslant i \leqslant s$ and $a, b \in R$;
      \item $\sigma_1, \ldots, \sigma_t$ act on $R$ as automorphisms;
      \item every two operators in $\Delta \cup \Sigma$ commute.
  \end{enumerate}
  A $\DS$-ring that is a field is called \emph{$\DS$-field}.
\end{definition}

\begin{example}
  Natural examples of $\DS$-fields include the following.
  \begin{itemize}[itemsep=0pt, topsep=1pt]
      \item Let $E = \mathbb{C}(x)$, $\Delta = \{\partial\}$, and $\Sigma = \{\sigma\}$.
      We can define a structure of $\DS$-field on $E$ by
      \[
        \delta( f(x) ) = \frac{\partial}{\partial x}f(x),\; \text{ and }\; \sigma(f(x)) = f(x + 1) \quad \text{ for every } f(x) \in E.
      \]
      In the same way, one can define $n$ derivations and $n$ automorphisms on $\mathbb{C}(x_1, \ldots, x_n)$.
      \item Let $E$ be the field of meromorphic functions on $\mathbb{C}$, $\Delta = \{\delta\}$, and $\Sigma = \{\sigma_1, \sigma_2\}$.
      We can define a structure of $\DS$-field on  $E$ by
      \[
      \delta(f(z)) = f'(z), \;\; \sigma_1(f(z)) = f(z + 1),\;\; \sigma_2(f(z)) = f(z + i) \text{ for every } f \in E.
      \]
      \item Let $E$ be the field of meromorphic functions on $\mathbb{C}$, $\Delta = \{\delta\}$, and $\Sigma = \{\sigma\}$.
      For every nonzero $q \in \mathbb{C}$, we can define a structure of $\DS$-field on $E$ by
      \[
      \delta(f(z)) = zf'(z), \;\; \sigma(f(z)) = f(qz) \text{ for every } f \in E.
      \]
  \end{itemize}
\end{example}

\begin{definition}[Extension of $\DS$-fields]
  An extension of fields $F \subset E$ where $E$ and $F$ are $\DS$-fields is said to be \emph{an extension of $\DS$-fields} if the action of $\Delta\cup\Sigma$ on $F$ coincides with the restriction to $F$ of the action of $\Delta\cup\Sigma$ on $E$.
\end{definition}

\begin{notation}
  For every $\bm{\alpha} := (\alpha_1, \ldots, \alpha_s) \in \mathbb{Z}_{\geqslant 0}^s$ and every $\bm{\beta} := (\beta_1, \ldots, \beta_t) \in \mathbb{Z}^t$, we introduce
  \[
    \delta^{\bm{\alpha}} := \delta_1^{\alpha_1}\ldots \delta_s^{\alpha_s} \quad\text{ and }\quad \sigma^{\bm{\beta}} := \sigma_1^{\beta_1} \ldots \sigma_t^{\beta_t}.
  \]
\end{notation}

\begin{theorem}[Main Result]\label{thm:main}
  Let $F \subset E$ be an extension of $\DS$-fields such that 
  \begin{itemize}
      \myitem{(1)} there exist $a_1, \ldots, a_n \in E$ such that $E = F\bigl( \delta^{\bm{\alpha}} \sigma^{\bm{\beta}} (a_j) \mid 1 \leqslant j \leqslant n,\; \bm{\alpha} \in \mathbb{Z}_{\geqslant 0}^s,\; \bm{\beta} \in \mathbb{Z}^t\bigr)$;
      \myitem{(2)} for every $1 \leqslant j \leqslant n$, the elements of $\{ \delta^{\bm{\alpha}} \sigma^{\bm{\beta}}(a_j) \mid \bm{\alpha} \in \mathbb{Z}_{\geqslant 0}^s,\; \bm{\beta} \in \mathbb{Z}^t\}$ are algebraically dependent over $F$;
      \myitem{(3$\delta$)}\label{item:three_delta} $\delta_1, \ldots, \delta_s$ are linearly independent over $E$;
      \myitem{(3$\sigma$)}\label{item:three_sigma} $\sigma_1, \ldots, \sigma_t$ are multiplicatively independent over $\mathbb{Z}$, that is, for every $\bm{\beta}\in \mathbb{Z}^t$, $\sigma^{\bm{\beta}}|_E = \operatorname{id} \iff \bm{\beta} = \bm{0}$.
  \end{itemize}
  Then there exists $a \in E$ such that $E = F\bigl( \delta^{\bm{\alpha}} \sigma^{\bm{\beta}} (a) \mid \bm{\alpha} \in \mathbb{Z}_{\geqslant 0}^s,\; \bm{\beta} \in \mathbb{Z}^t\bigr)$.
\end{theorem}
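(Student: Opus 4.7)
The plan is to realize the primitive element $a$ as an explicit finite truncation of a multivariate exponential generating series in the orbit $\{\delta^{\bm{\alpha}}\sigma^{\bm{\beta}}(a_j)\}$, as advertised in the outline. Concretely, I will seek $a$ in the form
\[
a \;=\; \sum_{j=1}^{n}\;\sum_{\substack{|\bm{\alpha}| \leqslant N \\ \|\bm{\beta}\|_\infty \leqslant N}} c_{j,\bm{\alpha},\bm{\beta}}\,\frac{\delta^{\bm{\alpha}}\sigma^{\bm{\beta}}(a_j)}{\bm{\alpha}!},
\]
where the truncation order $N$ and the coefficients $c_{j,\bm{\alpha},\bm{\beta}} \in F$ are to be chosen. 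The factorial denominators together with the grid of $\sigma$-shifts are precisely what allows one to re-interpret the family $\bigl(\delta^{\bm{\gamma}}\sigma^{\bm{\eta}}(a)\bigr)$ as the vector of partial derivatives at a lattice of sample points of a formal series built from $a_1,\ldots,a_n$, converting the problem of inverting the map ``$a \mapsto $ its $\DS$-orbit'' into a combined Hermite--Lagrange interpolation problem.

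As a first step I would establish a $\DS$-analogue of the Kolchin polynomial: using condition~(2) together with a Hilbert-series argument adapted to the mixed $\Delta\cup\Sigma$ setting, I would show that
\[
\chi(N) := \trdeg_F F\bigl(\delta^{\bm{\alpha}}\sigma^{\bm{\beta}}(a_j) \mid 1 \leqslant j \leqslant n,\; |\bm{\alpha}| \leqslant N,\; \|\bm{\beta}\|_\infty \leqslant N\bigr)
\]
grows strictly slower than the number of such generators, with a gap that eventually exceeds $n$. I would then fix $N$ large enough that this deficit is at least $n+1$; this governs how many interpolation conditions can be imposed on $a$ and is the only point where~(2) enters.

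The core step, corresponding to Lemmas~\ref{lem:core_differential} and~\ref{lem:core}, is to prove that for a Zariski-generic choice of the $c$'s each $a_j$ lies in $F\bigl(\delta^{\bm{\gamma}}\sigma^{\bm{\eta}}(a) \mid \bm{\gamma}\in\mathbb{Z}_{\geqslant 0}^s,\;\bm{\eta}\in\mathbb{Z}^t\bigr)$. The plan is to treat the $c$'s as indeterminates over $F$, apply $\delta^{\bm{\gamma}}\sigma^{\bm{\eta}}$ for sufficiently many multi-indices to the defining expression for $a$, and read off a linear system over $F[c]$ whose unknowns are the $a_j$'s and whose coefficient matrix has a ``block Vandermonde $\times$ Jacobian'' structure. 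Showing that the determinant of a well-chosen square subsystem is a nonzero polynomial in the $c$'s lets one specialize over the infinite field $F$ to concrete coefficients for which the $a_j$'s are rational functions of the $\DS$-orbit of $a$; together with~(1) this yields $E = F\bigl(\delta^{\bm{\gamma}}\sigma^{\bm{\eta}}(a) \mid \bm{\gamma},\bm{\eta}\bigr)$.

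The main obstacle, and where conditions~(3$\delta$) and~(3$\sigma$) enter decisively, is nonvanishing of that determinant. Its $\Delta$-part is, up to multiplicities and factorials, a power of a Jacobian $\det(\delta_i b_j)_{i,j}$ for suitable $b_j \in E$; condition~(3$\delta$) is the standard equivalence asserting that such $b_j$'s exist. Its $\Sigma$-part is a Vandermonde-type determinant in the distinct characters $\sigma^{\bm{\beta}}$, nonvanishing by Artin's independence of characters together with~(3$\sigma$). The genuinely new difficulty beyond the mono-operator cases of~\cite{primitive,primitiveKolchin,Cohn} is verifying that the $\Delta$- and $\Sigma$-factors together with the $c$-dependence do not conspire to cancel; this must be handled by exploiting the commutativity of $\Delta\cup\Sigma$ and by carefully choosing the grid of multi-indices $(\bm{\gamma},\bm{\eta})$ at which the operators are evaluated, so that the two factorizations live in independent ``directions'' of the interpolation matrix. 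Once this nonvanishing is established, the specialization and extraction of $a_j$'s proceed as sketched above.
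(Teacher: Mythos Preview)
Your candidate primitive element is an $F$-linear combination of the $\delta^{\bm\alpha}\sigma^{\bm\beta}(a_j)$, and this is precisely the shape that the paper rules out in Examples~\ref{ex:not_linear_comb_differential} and~\ref{ex:not_linear_comb_difference}. Take the differential case $F=\mathbb{C}$, $E=\mathbb{C}\langle\ln x,\ln(1-x)\rangle$: every derivative $(\ln x)^{(i)}$ and $(\ln(1-x))^{(i)}$ with $i\geqslant 1$ lies in $\mathbb{C}(x)$, so any $\mathbb{C}$-linear combination of these derivatives has the form $c_0\ln x+d_0\ln(1-x)+r(x)$ with $r\in\mathbb{C}(x)$; its derivative lies in $\mathbb{C}(x)$, hence $\mathbb{C}\langle a\rangle\subset\mathbb{C}(a,x)$ has transcendence degree at most~$2<3=\trdeg_{\mathbb{C}}E$. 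Your ``linear system whose unknowns are the $a_j$'s'' collapses here: after one differentiation all equations live in $\mathbb{C}(x)$ and never again see $\ln x$ or $\ln(1-x)$, so there is only a single equation in the two unknowns and no choice of $c$'s produces an invertible square subsystem. The factorials do not help, because the obstruction is that the candidate is \emph{linear} in the orbit, not how the coefficients are normalized.

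What the paper actually does is different in two essential respects. First, the primitive element is built inductively, absorbing one generator at a time, and at each stage the new candidate~\eqref{eq:primitive_general} is a \emph{polynomial} in a single carefully chosen element $b_\ell$ and its first derivatives $\delta_1 b_\ell,\ldots,\delta_s b_\ell$ (plus one linear term in $a_{\ell+1}$); the monomials $b_\ell^{\gamma_0}(\delta_1 b_\ell)^{\gamma_1}\cdots(\delta_s b_\ell)^{\gamma_s}$ with the factorial weights are exactly the Taylor coefficients of $e^{b_\ell x_0+(\delta_1 b_\ell)x_1+\cdots+(\delta_s b_\ell)x_s}$. Second, the core mechanism (Lemma~\ref{lem:core}) is not a Vandermonde/Jacobian determinant computation but a rigidity argument: the relation obtained by differentiating a minimal algebraic dependence with respect to the parameters~$\theta$ is reinterpreted as saying that a certain $F$-combination of truncated exponentials lies in $F[x]$; one then uses constant-coefficient PDE uniqueness (Lemma~\ref{lem:operator}) together with a Galois-conjugate argument to force $b_\ell\in F\langle B_{\ell+1}\rangle$. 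Conditions~\ref{item:three_delta} and~\ref{item:three_sigma} enter not as factors of a determinant but to guarantee, respectively, that the exponentials $\delta^{\bm\alpha}e^{(\mathbf b,\mathbf x)}$ are $E$-linearly independent (Lemma~\ref{lem:independence}) and that the shifts $\sigma^{\bm\beta}b_\ell$ are pairwise distinct so the conjugate-counting step goes through.
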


\begin{remark}\label{rem:CohnKolchin}
  Setting $s = 0$ and $t = 1$, we obtain a statement stronger than Cohn's theorem~\cite[p. 203, Theorem~III]{Cohn} in the case of zero characteristic. 
  Lemma~\ref{lem:nondegenerate_field_has_nonzero_Jac} implies that the requirement on the base field $F$ in Kolchin's theorem~\cite[\S 4]{primitiveKolchin} is the same as the condition~\ref{item:three_delta} on $E$ in Theorem~\ref{thm:main}.
  Thus, Theorem~\ref{thm:main} strengthens Kolchin's theorem as well.
\end{remark}

%%%%%%%%%%%%%%%%%%%%%%%%

\section{Examples}
\label{sec:examples}

\begin{notation}
  Let $F \subset E$ be an extension of $\DS$-fields. 
  For $a_1, \ldots, a_n \in E$, we set 
  \[
      F\langle a_1, \ldots, a_n \rangle := F\bigl( \delta^{\bm{\alpha}}\sigma^{\bm{\beta}} (a_j) \mid 1 \leqslant j \leqslant n,\; \bm{\alpha} \in \mathbb{Z}_{\geqslant 0}^s,\; \bm{\beta} \in \mathbb{Z}^t  \bigr).
    \]
\end{notation}

\begin{example}\label{ex:jacobi}
  This example illustrates how Theorem~\ref{thm:main} can be applied to classical special functions.
  Let $\Delta = \{\partial_z, \partial_\tau\}$ and $\Sigma = \varnothing$.
  Let $\mathcal{M}(\mathbb{C}, \mathcal{H})$ denote the field of bivariate meromorphic functions on $\mathbb{C} \times \mathcal{H}$ in variables $z$ and $\tau$, where $\mathcal{H} = \{\tau \in \mathbb{C} \mid \operatorname{Im}(\tau) > 0\}$.
  We consider $\mathcal{M}(\mathbb{C}, \mathcal{H})$ as a $\Delta$-field by letting $\partial_z$ and $\partial_\tau$ act as the partial derivatives with respect to $z$ and $\tau$, respectively.
  Let
  \[
  \theta_1(z, \tau) := -i \sum\limits_{j = -\infty}^\infty (-1)^j e^{(j + 1/2)^2\pi i \tau} e^{(2j + 1)\pi i z}
  \]
  be one of the Jacobi theta functions~\cite[\S~10.7]{AAR99}.
  $\theta_1(z, \tau)$ satisfies the following form of the heat equation~\cite[p.~433]{theta}
  \[
    \partial_z^2 \theta_1(z, \tau) = 4\pi i \partial_\tau \theta_1(z, \tau).
  \]
  This implies that $\theta_1(z, \tau)$ as well as $\theta_1(2z, \tau)$ and $\theta_1(3z, \tau)$ are $\DS$-algebraic over $\mathbb{C}$.
  Thus, Theorem~\ref{thm:main} applied to the extension 
  \[
  F := \mathbb{C} \subset E :=  \mathbb{C}(\theta_1(z, \tau), \theta_1(2z, \tau), \theta_1(3z, \tau))
  \]
  implies that there exists a function $f(z, \tau) \in E$ such that $\theta_1(z, \tau), \theta_1(2z, \tau)$, and $\theta_1(3z, \tau)$ can be written as rational functions in $f$ and its partial derivatives.
  Note that the original Kolchin's theorem (Theorem~\ref{thm:Kolchin}) is not applicable to this extension.
\end{example}

Examples~\ref{ex:not_linear_comb_differential} and~\ref{ex:not_linear_comb_difference} show that it might be impossible to construct a primitive element of an extension as a linear combination of the original generators even in the case of one operator (see also~\cite[Remark~2]{primitive}).

\begin{example}\label{ex:not_linear_comb_differential}
  Let $\Delta = \{\delta\}$ and $\Sigma = \varnothing$.
  Field $E := \mathbb{C}(x, \ln x, \ln (1 - x))$ is a $\DS$-field with $\delta(f(x)) = f'(x)$ for every $f \in E$.
  Since $x = \frac{1}{(\ln x)'}$, $E = \mathbb{C}\langle \ln x, \ln(1 - x) \rangle$.
  Since each of $\ln x$ and $\ln(1 - x)$ satisfies an algebraic differential equation with constant coefficients, the extension 
  \[
  F := \mathbb{C} \subset E = \mathbb{C}\langle \ln x, \ln (1 - x)\rangle
  \]
  satisfies the conditions of Theorem~\ref{thm:main}.
  Consider arbitrary $\alpha, \beta \in F$ and set $a = \alpha\ln x + \beta\ln(1 - x)$.
  Since $a' \in \mathbb{C}(x)$, $F\langle a\rangle \subset F(a, x)$.
  The latter has the transcendence degree at most two over $F$ but Ostrowski's theorem~\citep{O20} implies that $\ln x, \ln (1 - x)$, and $x$ are algebraically independent, so $\trdeg_F E = 3$.
  Hence, $F\langle a \rangle \neq E$.
  Thus, such $a$ cannot be a primitive element of the extension $F \subset E$.
\end{example}

\begin{example}\label{ex:not_linear_comb_difference}
  Let $\zeta(z, s) = \sum\limits_{n = 0}^\infty \frac{1}{(z + n)^s}$ be the Hurwitz zeta function~\citep[\S~1.3]{AAR99}.
  Let $\mathcal{M}(\mathbb{C})$ be the field of meromorphic functions on $\mathbb{C}$.
  Let $E := \mathbb{C}(z, \zeta(z, 2), \zeta(1 - z, 2))$ be a subfield in $\mathcal{M}(\mathbb{C})$.
  We define a $\DS$-structure on $\mathcal{M}(\mathbb{C})$ with $\Delta = \varnothing$ and $\Sigma = \{\sigma\}$ by
  \[
  \sigma(f(z)) = f(z + 1) \text{ for every } f \in \mathcal{\mathbb{C}}.
  \]
  Since
  \begin{equation}\label{eq:zeta_diff}
    \sigma(\zeta(z, 2)) = \zeta(z, 2) - \frac{1}{z^2} \quad \text{ and } \quad \sigma(\zeta(1 - z, 2)) = \zeta(1 - z, 2) + \frac{1}{z^2},
  \end{equation}
  $E$ is a $\DS$-subfield of $\mathcal{M}(\mathbb{C})$, and $E = \mathbb{C} \langle \zeta(z, 2), \zeta(1 - z, 2) \rangle$.
  \eqref{eq:zeta_diff} implies that $\zeta(z, 2)$ and $\zeta(1 - z, 2)$ are $\DS$-algebraic over $\mathbb{C}$, so the extension
  \[
  F := \mathbb{C} \subset E = \mathbb{C}\langle \zeta(z, 2), \zeta(1 - z, 2)\rangle
  \]
  satisfies the conditions of Theorem~\ref{thm:main}.
  We claim that $\trdeg_F E = 3$.
  Assume the contrary, that is, $\zeta(z, 2), \zeta(1 - z, 2)$, and $z$ are algebraically dependent over $\mathbb{C}$.
  Let $\mathcal{C} := \{f \in \mathcal{M}(\mathbb{C}) \mid \sigma(f) = f\}$.
  Since $\zeta(z, 2) + \zeta(1 - z, 2) \in \mathcal{C}$, the algebraic dependence between $\zeta(z, 2), \zeta(1 - z, 2)$, and $z$ implies that $\zeta(z, 2)$ is algebraic over $\mathcal{C}(z)$.
  Let $P(z, t) \in \mathcal{C}(z)[t]$ be its minimal monic polynomial and we set $d := \deg_t P$. 
  By applying $\sigma$ to $P(z, \zeta(z, 2))$ and using the minimality of $P$, we show that
  \[
    P(z, t) = P(z + 1, t - 1 / z^2).
  \]
  Then the coefficient $g(z) \in \mathcal{C}(z)$ of $t^{d - 1}$ in $P$ satisfies $g(z + 1) = g(z) + \frac{d}{z^2}$.
  One can check (using, for example, the function ratpolysols in {\sc Maple}) that there is no such rational function for $d \neq 0$.
  Thus, $\trdeg_F E = 3$.
  
  Consider arbitrary $\alpha, \beta \in F$ and set $a := \alpha\zeta(z, 2) + \beta \zeta(1 - z, 2)$.
  \eqref{eq:zeta_diff} implies that $\sigma(a) \in \mathbb{C}(z)$, so $F\langle a\rangle \subset F(a, z)$.
  Hence $\trdeg_F F\langle a\rangle \leqslant 2$.
  Hence, $F\langle a \rangle \neq E$.
  Thus, such $a$ cannot be a primitive element of the extension $F \subset E$.
\end{example}

\begin{example}\label{ex:cannot_drop}
  This example shows that neither of the conditions~\ref{item:three_delta} and~\ref{item:three_sigma} in Theorem~\ref{thm:main} can be removed.
  We fix $s, t \in \mathbb{Z}_{\geqslant 0}$, $\Delta = \{\delta_1, \ldots, \delta_s\}$, and $\Sigma = \{\sigma_1, \ldots, \sigma_t\}$.
  Consider a free $\DS$-extension of $\mathbb{Q}$ with two generators, $x_1$ and $x_2$:
  \[
  \mathbb{Q} \subset E := \mathbb{Q} (\delta^{\bm{\alpha}}\sigma^{\bm{\beta}}x_i \mid \bm{\alpha} \in \mathbb{Z}_{\geqslant 0},\, \bm{\beta} \in \mathbb{Z}, \, i = 1, 2),
  \]
  where all $\delta^{\bm{\alpha}}\sigma^{\bm{\beta}}x_i$ are algebraically independent and $\Delta \cup \Sigma$ acts naturally (see~\eqref{eq:action_on_free}).
  \cite[Theorem~3.5.38]{KLMP} implies that this extension $\mathbb{Q} \subset E$ cannot be generated by one element as a $\DS$-field extension. 
  
  Let $\Delta_1 := \Delta \cup \{\delta_{s + 1}\}$.
  We choose rational numbers $c_1, \ldots, c_s$ and make $E$ a $\Delta_1{\hbox{-}}\Sigma$-field by defining $\delta_{s + 1}(a) := c_1\delta_1(a) + \ldots + c_s\delta_s(a)$ for every $a \in E$.
  For every $a \in E$, the subfield generated by $a$ using $\Delta_1 \cup \Sigma$ is the same as the subfield generated by $\Delta \cup \Sigma$.
  Thus, $\mathbb{Q} \subset E$ cannot be generated by one element as an extension of $\Delta_1{\hbox{-}}\Sigma$-fields.
  On the other hand, every $a \in E$ is $\Delta_1{\hbox{-}}\Sigma$-algebraic over $\mathbb{Q}$ because it satisfies $c_1\delta_1a + \ldots + c_s\delta_s a - \delta_{s + 1} a = 0$.
  Thus, the condition~\ref{item:three_delta} in Theorem~\ref{thm:main} cannot be removed.
  A similar argument with a superfluous automorphism $\sigma_{t + 1} := \sigma_1^{k_1} \sigma_2^{k_2}\ldots \sigma_t^{k_t}$, where $k_1, \ldots, k_t \in \mathbb{Z}$, show that the condition~\ref{item:three_sigma} cannot be removed.
\end{example}

%%%%%%%%%%%%%%%%%%%%%%%%

\section{Definitions and notation used in proofs}
\label{sec:def_proofs}

In the proofs, we will write $\delta^{\bm{\alpha}} \sigma^{\bm{\beta}} a$ instead of $\delta^{\bm{\alpha}} \sigma^{\bm{\beta}} (a)$

\begin{definition}[$\DS$-algebraicity]
  Let $F \subset E$ be an extension of $\DS$-fields. 
  An element $a \in E$ is \emph{$\DS$-algebraic} over $F$ if the set
    \[
    \{\delta^{\bm{\alpha}} \sigma^{\bm{\beta}} a \;|\; \bm{\alpha} \in \mathbb{Z}_{\geqslant 0}^s, \; \bm{\beta} \in \mathbb{Z}_{\geqslant 0}^t\}
    \]
    is algebraically dependent over $F$.
\end{definition}

\begin{definition}[Nondegenerate $\DS$-field]\label{def:nondegenerate}
  A $\DS$-field $E$ is called \emph{nondegenerate} if it satisfies conditions~\ref{item:three_delta} and~\ref{item:three_sigma} of Theorem~\ref{thm:main}, namely,
  \begin{itemize}
      \item[(3$\delta$)] $\delta_1, \ldots, \delta_s$ are linearly independent over $E$;
      \item[(3$\sigma$)] $\sigma_1, \ldots, \sigma_t$ are multiplicatively independent over $\mathbb{Z}$, that is, for every $\bm{\beta}\in \mathbb{Z}^t$, $\sigma^{\bm{\beta}}|_E = \operatorname{id} \iff \bm{\beta} = \bm{0}$.
  \end{itemize}
\end{definition}

\begin{definition}[$\DS$-constants]\label{def:constants}
  An element $a \in E$ of a $\DS$-field $E$ is said to be a \emph{constant} if $\delta_i a = 0$ for every $1 \leqslant i \leqslant s$ and $\sigma_j a = a$ for every $1\leqslant j \leqslant t$.
  Constants form a subfield in $E$.
  We will denote this subfield by $C(E)$.
\end{definition}

\begin{definition}[$\DS$-polynomials]
  Let $R$ be a $\DS$-ring.
  Consider the following ring of polynomials over $R$
  \[
    R\{x\} := R[ \delta^{\bm{\alpha}} \sigma^{\bm{\beta}} x \;|\; \bm{\alpha} \in \mathbb{Z}_{\geqslant 0}^s, \; \bm{\beta} \in \mathbb{Z}_{\geqslant 0}^t],
  \]
  where each $\delta^{\bm{\alpha}} \sigma^{\bm{\beta}} x$ is a separate variable.
  We can extend the structure of $\DS$-ring from $R$ to $R\{x\}$ by 
  \begin{equation}\label{eq:action_on_free}
    \delta_i \left( \delta^{\bm{\alpha}} \sigma^{\bm{\beta}} x \right) := \delta^{\bm{\alpha} + \bm{1}_i} \sigma^{\bm{\beta}}x \quad\text{ and }\quad \sigma_j \left( \delta^{\bm{\alpha}} \sigma^{\bm{\beta}} x \right) := \delta^{\bm{\alpha}} \sigma^{\bm{\beta} + \bm{1}_j} x,
  \end{equation}
  where $\bm{1}_i$ and $\bm{1}_j$ denote the $i$-th basis vector in $\mathbb{Z}^s$ and the $j$-th basis vector in $\mathbb{Z}^t$, respectively.
  Elements of $R\{x\}$ are called \emph{$\DS$-polynomials in $x$}.
\end{definition}

\begin{notation}
  Let $n$ be a positive integer.
  \begin{itemize}
      \item For $\bm{v} = (v_1, \ldots, v_n) \in \mathbb{Z}^{n}$, we define $|\bm{v}| := |v_1| + \ldots + |v_n|$.
      If $\bm{v} \in \mathbb{Z}_{\geqslant 0}^n$, then we also define $\bm{v}! := v_1! \ldots v_n!$.
      \item For a positive integer $m$, we define $\mathbb{Z}_{\geqslant 0}^n (m) := \{ \bm{v} \in \mathbb{Z}_{\geqslant 0}^n \;|\; |\bm{v}| \leqslant m  \}$.
  \end{itemize}
\end{notation}

\begin{definition}[Nonperiodic elements]\label{def:nonperiodic}
  Let $E$ be a $\DS$-field.
  \begin{itemize}
      \item An element $a \in E$ is said to be \emph{nonperiodic} if, for every nonzero $\bm{\beta} \in \mathbb{Z}^t$, $\sigma^{\bm{\beta}} a \neq a$.
      \item For a positive integer $m$, an element $a \in E$ is called \emph{$m$-nonperiodic} if 
  \[
  \left( \sigma^{\bm{\alpha}} a = \sigma^{\bm{\beta}} a \;\;\&\;\; \bm{\alpha}, \bm{\beta} \in \mathbb{Z}^t_{\geqslant 0}(m) \right) \implies \bm{\beta} = \bm{\alpha}.
  \]
  \end{itemize}
\end{definition}

\begin{notation}
  Let $K$ be a field and $K[\![x_1, \ldots, x_n]\!]$ be the formal power series ring over $K$.
  For an element 
  \[
  f = \sum\limits_{\bm{k} \in \mathbb{Z}^n_{\geqslant 0}} c_{\bm{k}} x^{\bm{k}} \in K[\![x_1, \ldots, x_n]\!],
  \]
  we denote its truncation at order $m$ by
  \[
  [f]_m := \sum\limits_{\bm{k} \in \{0, 1, \ldots, m\}^n} c_{\bm{k}} x^{\bm{k}} \in K[x_1, \ldots, x_n].
  \]
\end{notation}

%%%%%%%%%%%%%%%%%%%%%%%%

\section{Proof for differential fields}
\label{sec:differential_proof}

In this section, we consider the case $\Delta = \{\delta\}$ and $\Sigma = \varnothing$. 
We will denote $\delta a$ by $a'$ and say ``differential field'' and ``differentially algebraic'' instead of ``$\DS$-field'' and ``$\DS$-algebraic'', respectively. 

\begin{lemma}[Special case of Lemma~\ref{lem:operator}]\label{lem:operator_1d}
  Let $\partial = \frac{\partial}{\partial x}$ be the standard derivation on $K[\![x]\!]$.
  Let $D \in K[\partial]$ be a differential operator of order $m$.
  Then, for every $f \in K[\![ t ]\!]$,
  \[
  \bigl( Df = 0 \;\; \& \;\; [f]_m = 0 \bigr) \implies f = 0.
  \]
\end{lemma}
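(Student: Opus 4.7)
The plan is to reduce the statement to a one-variable linear recurrence on the Taylor coefficients of $f$. First I would write $D = \sum_{i=0}^{m} a_i \partial^i$ with $a_0, \ldots, a_m \in K$ and, since $\operatorname{ord} D = m$, with $a_m \neq 0$. Expanding $f = \sum_{k \geq 0} f_k x^k \in K[\![x]\!]$, the hypothesis $[f]_m = 0$ translates into $f_0 = f_1 = \ldots = f_m = 0$.

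Next I would extract the coefficient of $x^k$ from the identity $Df = 0$ in $K[\![x]\!]$. Using the fact that $\partial^i(\sum_\ell f_\ell x^\ell) = \sum_\ell f_{\ell+i}\frac{(\ell+i)!}{\ell!} x^\ell$, this yields, for every $k \geq 0$,
\[
\sum_{i=0}^{m} a_i \frac{(k+i)!}{k!}\, f_{k+i} = 0.
\]
Because $K$ has characteristic zero and $a_m \neq 0$, the coefficient $a_m (k+m)!/k!$ of $f_{k+m}$ is nonzero, so this equation uniquely solves for $f_{k+m}$ as a $K$-linear combination of $f_k, f_{k+1}, \ldots, f_{k+m-1}$.

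A straightforward induction on $k$ then closes the argument: the initial values $f_0, \ldots, f_{m-1}$ vanish by hypothesis, and the recurrence propagates the vanishing to every subsequent coefficient, giving $f = 0$.

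I do not see a substantive obstacle. The key point is simply that $\operatorname{ord} D = m$ makes the leading symbol $a_m$ nonzero, which makes the recurrence solvable in the forward direction, while the condition $[f]_m = 0$ supplies more than enough zero initial data (in fact $m+1$ vanishing coefficients, one more than the $m$ strictly needed) to launch the induction.
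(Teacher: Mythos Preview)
Your proposal is correct and follows essentially the same approach as the paper: both arguments rest on the fact that a formal power series solution of $Df=0$ with $D$ of order $m$ and constant coefficients is uniquely determined by its first $m$ Taylor coefficients. The paper states this uniqueness in one sentence and concludes immediately, while you spell it out via the explicit linear recurrence on the coefficients and an induction; this is exactly the mechanism the paper uses in its proof of the multivariable generalization (Lemma~\ref{lem:operator}).
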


\begin{proof}
  Since $D$ has $\partial$-constant coefficients, every solution of $D$ is uniquely defined by its first $m$ Taylor coefficients.
  Since $f$ has the same first $m$ Taylor coefficients as the zero solution, $f = 0$.
\end{proof}

For the rest of the section, for a differential field $E$, we extend the derivation from $E$ to $E[\![ x ]\!]$ by 
\begin{equation}\label{eq:extend_der_diff}
\bigl( \sum\limits_{i = 0}^\infty c_i x^i \bigr)' = \sum\limits_{i = 0}^\infty c_i'x^i \quad \text{ for every } \sum\limits_{i = 0}^\infty c_i x^i \in E[\![x]\!].
\end{equation}

\begin{lemma}[Special case of Lemma~\ref{lem:independence}]\label{lem:independence_ordinary}
  Let $E$ be a  differential field.
  Let $a \in E$ be a nonconstant element.
  For $m \in \mathbb{Z}_{\geqslant 0}$, we introduce the following subset of $E[\![x]\!]$ (with the derivation defined in~\eqref{eq:extend_der_diff})
  \[
  S_{m} := \left\{ (e^{a x})^{(r)} \;|\; 0 \leqslant r \leqslant m \right\}.
  \]
  Then the elements of $S_m$ are linearly independent over $E$.
\end{lemma}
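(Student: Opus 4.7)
The plan is to render each derivative $(e^{ax})^{(r)}$ as a product $Q_r(x) \cdot e^{ax}$ for an explicit polynomial $Q_r \in E[x]$, and then to argue that the $Q_r$ are ``triangular'' in $x$ so that any $E$-linear dependence among the $(e^{ax})^{(r)}$ collapses term by term.

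First, I would exploit the fact that the derivation on $E[\![x]\!]$ defined by~\eqref{eq:extend_der_diff} acts only on the coefficients and leaves $x$ alone. A direct computation then gives
\[
(e^{ax})' \;=\; \sum_{k=0}^\infty \frac{(a^k)'\, x^k}{k!} \;=\; a' x \cdot e^{ax}.
\]
Setting $Q_0 := 1$ and using the product rule I would establish the recursion $Q_{r+1} = Q_r' + a' x Q_r$, where $Q_r'$ denotes applying $\delta$ to the $E$-coefficients. By induction on $r$ I would then show $\deg_x Q_r = r$ with leading coefficient $(a')^r$: indeed $Q_r'$ has $x$-degree at most $r$, while multiplication by $a'x$ strictly raises the degree, contributing a leading term $(a')^{r+1} x^{r+1}$ to $Q_{r+1}$. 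Because $a$ is nonconstant we have $a' \neq 0$ in the field $E$, so $(a')^r \neq 0$ for every $r$.

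Now assume a relation $\sum_{r=0}^{m} c_r (e^{ax})^{(r)} = 0$ with $c_r \in E$. Since $e^{ax}$ has constant term $1$, it is a unit in $E[\![x]\!]$, so after dividing we obtain
\[
\sum_{r=0}^{m} c_r \, Q_r(x) \;=\; 0 \quad \text{in } E[x].
\]
Extracting the coefficient of $x^m$ gives $c_m (a')^m = 0$, hence $c_m = 0$; iterating on $m-1, m-2, \ldots, 0$ forces every $c_r$ to vanish, which is the desired independence.

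There is no serious obstacle here; the statement is essentially a bookkeeping exercise once one notices the key (and slightly counterintuitive) point that the derivation $\delta$ on $E[\![x]\!]$ acts only through the coefficients, so each differentiation of $e^{ax}$ multiplies by $a'x$ and therefore strictly \emph{increases} the $x$-degree of the coefficient polynomial rather than decreasing it, producing the triangular structure that drives the induction. The only place one uses the hypothesis that $a$ is nonconstant is to guarantee $a' \neq 0$, which is exactly what makes the leading coefficients $(a')^r$ nonzero.
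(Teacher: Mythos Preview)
Your proof is correct and follows essentially the same route as the paper: both arguments rest on the observation that $(e^{ax})^{(r)} = Q_r(x)\,e^{ax}$ with $Q_r$ a polynomial of exact $x$-degree $r$ and leading coefficient $(a')^r$, which is nonzero since $a$ is nonconstant. The paper packages this as an induction on $m$ showing $(e^{ax})^{(m+1)} \equiv (a'x)^{m+1} e^{ax}$ modulo $V\,e^{ax}$ with $V$ the polynomials of degree at most $m$, while you make the recursion $Q_{r+1} = Q_r' + a'x\,Q_r$ explicit and read off the triangularity directly; these are the same argument in slightly different dress.
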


\begin{proof}
  We will prove the lemma by induction on $m$.
  The base case $m = 0$ is true because $S_0 = \{e^{ax}\}$.
  Assume that we have proved the lemma for some $m \geqslant 0$.
  Let $V$ be a space of all polynomials from $E[x]$ of degree at most $m$.
  Then $S_m \subset V_e := Ve^{ax}$.
  Hence it is sufficient to prove that $S_{m + 1}\setminus S_m = \{(e^{ax})^{(m + 1)}\}$ does not belong to $V_e$.
  This is true because
  \[
    (e^{ax})^{(m + 1)} \equiv (a'x)^{m + 1}e^{ax} \pmod{V_e}\quad \text{ and }\quad a' \neq 0.\qedhere
  \]
\end{proof}

\begin{lemma}[Special case of Lemma~\ref{lem:core}]\label{lem:core_differential}
  Let $F \subset E$ be an extension of differential fields, and the derivation on $E[\![x]\!]$ is defined as in~\eqref{eq:extend_der_diff}.
  Let $a \in E$ be a nonconstant element such that there exists a nontrivial $F$-linear combination of the truncations
  \[    
    [e^{ax}]_{2(m + 1)},\;\; [(e^{ax})']_{2(m + 1)},\;\; \ldots,\;\; [(e^{ax})^{(m)} ]_{2(m + 1)}
  \]
  that belongs to $F[x]$. Then $a \in F$.
\end{lemma}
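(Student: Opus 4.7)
The plan is to translate the truncation hypothesis into the vanishing of certain polynomials in $a$ with coefficients in $F$, and then force $a \in F$ via a multiplicity-and-degree argument.

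First I would set $g := \sum_{i=0}^m c_i (e^{ax})^{(i)}$ and $p := [g]_{2(m+1)} \in F[x]$. An easy induction shows $(e^{ax})^{(i)} = p_i(x)\,e^{ax}$ with $p_i \in E[x]$ of $x$-degree $i$ and leading coefficient $(a')^i$; hence $g = h(x)\,e^{ax}$ with $h := \sum_i c_i p_i \in E[x]$ of $x$-degree at most $m$. Because $a$ is nonconstant, $a' \ne 0$, the $p_i$ have pairwise distinct $x$-degrees, and so (equivalently by Lemma~\ref{lem:independence_ordinary}) $h \ne 0$. Writing $g = p + r$ with $r$ of $x$-valuation $\ge 2m+3$ and multiplying both sides by $e^{-ax} \in E[\![x]\!]$ yields
\[
p(x)\,e^{-ax} \;=\; h(x) \;-\; r(x)\,e^{-ax}.
\]
The right-hand side carries no $x$-monomials of degrees $m+1, \ldots, 2m+2$, because $h$ contributes only in degrees $\le m$ while $r\,e^{-ax}$ contributes only in degrees $\ge 2m+3$.

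Next, I would introduce
\[
\Psi_n(t) \;:=\; \sum_{j=0}^{n} \frac{p_{n-j}\,(-t)^j}{j!} \;\in\; F[t],
\]
which are, by construction, the coefficients in $p(x)\,e^{-tx} = \sum_{n \ge 0}\Psi_n(t)\,x^n$. The previous paragraph then reads $\Psi_n(a) = 0$ for $n = m+1, \ldots, 2m+2$. A term-by-term differentiation gives the crucial recurrence
\[
\Psi_n'(t) \;=\; -\Psi_{n-1}(t), \qquad\text{hence}\qquad \Psi_n^{(j)}(t) \;=\; (-1)^j\, \Psi_{n-j}(t).
\]
Specialising to $n = 2m+2$ shows $\Psi_{2m+2}^{(j)}(a) = 0$ for every $j = 0, 1, \ldots, m+1$; that is, $a$ is a root of $\Psi_{2m+2}(t) \in F[t]$ of multiplicity at least $m+2$ in $\overline{F}$.

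Finally, I would argue by contradiction. Suppose $a \notin F$, and let $\mu(t) \in F[t]$ be its minimal polynomial, of degree $d \ge 2$. Since $\mu$ is separable in characteristic zero, a standard propagation argument (divide $\Psi_{2m+2}$ by $\mu$ repeatedly, using $\mu'(a) \ne 0$ at each step) upgrades the multiplicity-at-$a$ statement to the divisibility $\mu^{m+2} \mid \Psi_{2m+2}$ in $F[t]$. Comparing degrees, $d(m+2) \ge 2m+4 > 2m+2 \ge \deg \Psi_{2m+2}$, which forces $\Psi_{2m+2} \equiv 0$. Reading off its coefficients then yields $p_0 = p_1 = \cdots = p_{2m+2} = 0$, so $p = 0$ and $g = r$ has $x$-valuation $\ge 2m+3$; but $g = h\,e^{ax}$ with $h \ne 0$ of $x$-degree $\le m$ has $x$-valuation $\le m$, a contradiction. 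The two places that require care, and which I expect to contain the real content of the proof, are the recurrence $\Psi_n' = -\Psi_{n-1}$ (the precise point where the factorials in the truncated exponential are exploited) and the multiplicity-to-divisibility promotion; the rest is bookkeeping with truncated power series.
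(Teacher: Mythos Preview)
Your argument is correct and takes a genuinely different route from the paper. One small omission: writing ``let $\mu(t)\in F[t]$ be its minimal polynomial'' tacitly assumes $a$ is algebraic over $F$; if $a$ is transcendental then already $\Psi_{2m+2}(a)=0$ forces $\Psi_{2m+2}\equiv 0$ and your contradiction follows at once, so the case split is easy but should be stated.

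The paper shares your opening move (writing the combination as $C(x)e^{ax}$ with $C\neq 0$ of degree $\le m$) but then argues Galois-theoretically: for each automorphism $\tau$ of $\overline{E}$ fixing $F$, the difference $S := Ce^{ax} - \tau(C)e^{\tau(a)x}$ satisfies $[S]_{2(m+1)}=0$ and is annihilated by $D=(\partial-a)^{m+1}(\partial-\tau(a))^{m+1}$; Lemma~\ref{lem:operator_1d} then gives $S=0$, whence $\tau(a)=a$, so $a\in F$. You instead package the hypothesis as $\Psi_n(a)=0$ for $n=m+1,\ldots,2m+2$ with $\Psi_n\in F[t]$, use the recurrence $\Psi_n'=-\Psi_{n-1}$ to convert these into a multiplicity-$(m+2)$ root of $\Psi_{2m+2}$, and finish by a degree count against the minimal polynomial. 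Your route is more elementary---it bypasses Lemma~\ref{lem:operator_1d} and any passage to $\overline{E}$---and the recurrence makes the role of the factorials especially transparent. The paper's route, on the other hand, is the one that scales: the automorphism-plus-operator pattern is exactly what generalises to Lemma~\ref{lem:core}, where the shifts $\sigma^{\bm\beta}$ produce several distinct exponential terms and a single-polynomial multiplicity argument is no longer available in the same form.
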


\begin{proof}
  We are given that there exist $c_0, \ldots, c_m \in F$ not all zero such that 
  \[
  c_0 [e^{ax}]_{2(m + 1)} + \ldots + c_m [(e^{ax})^{(m)}]_{2(m + 1)} = f \in F[x].
  \]
 
  \begin{step}\label{step:1_differential}
  There exists a nonzero polynomial $C(x) \in E[x]$ of degree at most $m$ such that 
  \[
    c_0 e^{ax} + \ldots + c_m (e^{ax})^{(m)} = C(x) e^{ax}.
  \]
  \end{step}
  
  \noindent
  Note that every $E$-linear combination of $e^{ax}, \ldots, (e^{ax})^{(m)}$ is a product of $e^{ax}$ and an element of $E[x]$ of degree at most $m$.
  Since not all $c_0, \ldots, c_m$ are zeros, Lemma~\ref{lem:independence_ordinary} implies that $C(x) \neq 0$.

  \begin{step}\label{step:2_differential}
    Let $\overline{E} \supset E$ be an algebraic closure of $E$.
    For every field automorphism $\tau\colon \overline{E} \to \overline{E}$ such that $\tau|_F = \operatorname{id}$, we have $\tau(a) = a$.
  \end{step}
  
  \noindent
  Let $S := C e^{a x} - \tau(C) e^{\tau(a)x}$.
  Since $\tau|_F = \operatorname{id}$, $[S]_{2(m + 1)} = 0$.
  Since $\deg C(x) \leqslant m$, we have
  \[
  D S = 0, \quad \text{ where } \quad D := (\partial - a)^{m + 1}(\partial - \tau(a))^{m + 1}.
  \]
  Since the order of $D$ is $2(m + 1)$ and $[S]_{2(m + 1)} = 0$, Lemma~\ref{lem:operator_1d} implies that $S = 0$.
  Then $e^{a x}$ and $e^{\tau(a) x}$ are linearly dependent over $E(x)$.
  This is possible only if $a = \tau(a)$.
  
  \begin{step}\label{step:3_differential}
    $a \in F$.
  \end{step}
  
  \noindent
  If $a \not\in F$, then there exists an automorphism $\tau\colon \overline{E} \to \overline{E}$ such that $\tau|_F = \operatorname{id}$ and $a \neq \tau(a)$.
  This is impossible due to Step~\ref{step:2_differential}.
\end{proof}
  
  \begin{theorem}
  Let $F \subset E$ be an extension of differential fields such that 
  \begin{enumerate}[label=(\arabic*)]
      \item there exist $a_1, \ldots, a_n \in E$ such that $E = F\langle a_1, \ldots, a_n \rangle$;
      \item for every $1 \leqslant j \leqslant n$, $a_j$ is differentially algebraic over $F$;
      \item $E$ contains a nonconstant element.
  \end{enumerate}
  Then there exists $a \in E$ such that $E = F\langle a \rangle$.
  \end{theorem}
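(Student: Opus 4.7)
The plan is to construct a primitive element $\alpha \in E$ as a truncated Taylor-series-like combination of the generators, their derivatives, and an auxiliary nonconstant element $b \in E$, and then to recover each $a_j$ from $\alpha$ by applying Lemma~\ref{lem:core_differential} after passage to the formal power-series ring $E[\![x]\!]$ carrying the derivation~\eqref{eq:extend_der_diff}. First I would fix a nonconstant $b \in E$ (available by hypothesis~(3)); since every $a_j$ is differentially algebraic over $F$, the Kolchin polynomial of each $F\langle a_j\rangle/F$ is a constant, so there is an integer $m$ with $F\langle a_j\rangle = F(a_j, a_j', \ldots, a_j^{(m)})$ for all $j$. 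I would then define
\[
\alpha \;:=\; \sum_{j=1}^{n} \sum_{i=0}^{m} \frac{a_j^{(i)}\, b^{N_j + i}}{(N_j + i)!}
\]
for well-separated shifts $N_1 < N_2 < \cdots < N_n$ satisfying $N_{j+1} - N_j > 2(m+1)$, so that the monomial-degree bands contributed by different generators do not overlap. The factorials are the ``exponential generating series'' feature mentioned in the outline: formally replacing $b$ by the indeterminate $x$ turns the inner sum over $i$ into a block supported near degree $N_j$ in the Taylor expansion of $e^{a_j x}$.

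To conclude $E = F\langle \alpha\rangle$ it suffices to show $a_j \in F\langle \alpha\rangle$ for every $j$, which I would establish by reverse induction on $j$. Fixing $j$ and setting $F' := F\langle \alpha, a_{j+1}, \ldots, a_n\rangle$, I would lift the identity defining $\alpha$ into $E[\![x]\!]$ via the substitution $b \mapsto x$. The contributions of indices $>j$ pass into the $F'$-coefficients by induction, and the contributions of indices $<j$ occupy strictly lower degree bands and can be absorbed into $F'[x]$ by their truncation. What remains is a nontrivial $F'$-linear relation among the truncations $[e^{a_j x}]_{2(m+1)}, [(e^{a_j x})']_{2(m+1)}, \ldots, [(e^{a_j x})^{(m)}]_{2(m+1)}$ lying in $F'[x]$, so Lemma~\ref{lem:core_differential} applied over $F'$ forces $a_j \in F'$. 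The degenerate case in which $a_j$ happens to be a constant is handled directly by comparing the degree-zero terms, so the induction goes through.

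The principal difficulty I anticipate is making the ``$b \mapsto x$'' passage rigorous and compatible with the derivation, so that a polynomial identity in $E$ really does produce an identity in $E[x]$ whose coefficients are truncated Taylor derivatives of $e^{a_j x}$ in the precise sense required by Lemma~\ref{lem:core_differential}. Concretely, one must isolate a finite-dimensional $F$-subspace of $E[b]$ on which the evaluation $x \mapsto b$ is injective and intertwines~\eqref{eq:extend_der_diff} with $\delta$ up to order $2(m+1)$. Calibrating the shifts $N_j$ and the truncation depth jointly with $m$ so that both the disjointness of degree bands and the hypotheses of Lemma~\ref{lem:core_differential} hold at every inductive step is the delicate technical part; Lemmas~\ref{lem:operator_1d} and~\ref{lem:independence_ordinary} then furnish the remaining algebraic machinery to close the argument.
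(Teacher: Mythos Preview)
There is a genuine gap: your proposal never explains where the nontrivial $F'$-linear relation required by Lemma~\ref{lem:core_differential} comes from. In the paper the candidate $B_{\ell+1}$ is built with \emph{free parameters} $\theta_i$, so that $B_{\ell+1}, B_{\ell+1}', \ldots, B_{\ell+1}^{(M)}$ (with $M=\trdeg_F E$) become algebraically dependent over $F(\Theta)$; differentiating the resulting relation $R(B_{\ell+1})=0$ with respect to each $\theta_j$, multiplying by $x^j$, and summing is precisely what manufactures a nonzero combination $\sum_i R_i[(e^{b_\ell x})^{(i)}]_{2(M+1)}$ lying in $F\langle\Theta,B_{\ell+1}\rangle[x]$. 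Your $\alpha$ has no parameters and you invoke no algebraic dependence, so ``substituting $b\mapsto x$'' in the single identity $\alpha\in F'$ yields one polynomial equality, not the $(m+1)$-term linear relation the lemma needs.

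There is also a structural mismatch. Under~\eqref{eq:extend_der_diff} the coefficient of $x^k$ in $(e^{ax})^{(r)}$ is $(a^k)^{(r)}/k!$, a polynomial in $a,a',\ldots,a^{(r)}$; the truncations in Lemma~\ref{lem:core_differential} are therefore built from \emph{powers} of the exponent, not from its derivatives. Your term $a_j^{(i)}b^{N_j+i}/(N_j+i)!$ does not land in the span of the $[(e^{a_j x})^{(i)}]$ under any honest map $b\mapsto x$, so the passage you describe cannot produce the hypothesis of the lemma. Note also that in the paper the nonconstant element sits in the exponent of $e^{ax}$ and the lemma outputs $b_\ell\in F'$ (after which $a_{\ell+1}$ drops out trivially from the $\theta_{-1}$-coefficient); you invert these roles and try to conclude $a_j\in F'$ directly, which forces the ad hoc ``$a_j$ constant'' patch and, more seriously, leaves no mechanism for recovering $b$ itself.
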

  
  \begin{proof}
    Since each of $a_1, \ldots, a_n$ is differentially  algebraic over $F$, $M := \trdeg_F E < \infty$~\cite[Corollary~1, p.~112]{KolchinBook}.
    We will prove by induction on $\ell$ that, for every $0 \leqslant \ell \leqslant n$, there exists $b_\ell \in E$ such that
    \begin{itemize}
      \item $b_\ell' \neq 0$;
      \item $E = F\langle b_\ell, a_{\ell + 1}, \ldots, a_n \rangle$.
    \end{itemize}
    Since $E = F\langle b_n\rangle$, proving the existence of such $b_0, \ldots, b_n$ will prove the theorem.
    
    For the base case $\ell = 0$, we choose $b_0$ to be any nonconstant element of $E$.
    Assume that we have constructed $b_{\ell}$ for some $\ell \geqslant 0$.
    We introduce a set of variables
    \[
      \Theta := \{ \theta_i \;|\; -1 \leqslant i \leqslant 2(M + 1)\}
    \]
    and extend the derivation from $E$ to $E[\Theta]$ by making the elements of $\Theta$ constants.
    Let
    \begin{equation}\label{eq:primitive_differential}
    B_{\ell + 1} := \theta_{-1} a_{\ell + 1} + \sum\limits_{i = 0}^{2(M + 1)} \frac{\theta_i}{i!} b_{\ell}^i.
    \end{equation}
     We regard any point $\varphi \in \mathbb{Q}^{|\Theta|}$ as a function $\varphi\colon \Theta \to \mathbb{Q}$ and extend it to a $E$-algebra homomorphism $\varphi\colon E[\Theta] \to E$.
  
  \begin{claim}
    There exists a Zariski open nonempty subset $U_1  \subset \mathbb{Q}^{|\Theta|}$ such that 
  \[
  F\langle \varphi(B_{\ell + 1}) \rangle = F\langle b_{\ell}, a_{\ell + 1} \rangle \quad \text{for every } \varphi \in U_1.
  \]
  \end{claim}
  \noindent
  Since
  \[
  \trdeg_{F(\Theta)} F(\Theta, B_{\ell + 1}, B_{\ell + 1}', \ldots, B_{\ell + 1}^{(M)} ) \leqslant \trdeg_F F\langle a_{\ell + 1}, b_{\ell} \rangle \leqslant M,
  \]
  $B_{\ell + 1}, B_{\ell + 1}', \ldots, B_{\ell + 1}^{(M)}$ are algebraically dependent over $F(\Theta)$.
  Thus, there exists a differential polynomial $R \in F(\Theta)[ z, z', \ldots, z^{(M)} ]$ such that $R(B_{\ell + 1}) = 0$.
  We will assume that $R$ is chosen to be of the minimal possible total degree.
  We introduce
  \[
  R_{i} := \frac{\partial R}{\partial z^{(i)}} (B_{\ell + 1}) \text{ for } 0 \leqslant i \leqslant M \quad\text{ and }\quad R_{\theta_{j}} := \frac{\partial R}{\partial \theta_{j}} (B_{\ell + 1}) \text{ for } -1 \leqslant j \leqslant 2(M + 1).
  \]
  The minimality of the degree of $R$ implies that not all of $R_{i}$ are zero.
  Consider any $0 \leqslant j \leqslant 2(M + 1)$.
  Differentiating $R(B_\ell) = 0$ with respect to $\theta_{j}$, we obtain
  \begin{equation}\label{eq:derivative_differential}
  \sum\limits_{i = 0}^M R_{i} \frac{(b_{\ell}^j)^{(i)}}{j!} = -R_{\theta_{j}}.
  \end{equation}
  Consider the power series ring $E[\![x]\!]$ with the derivation defined in~\eqref{eq:extend_der_diff}.
  We multiply~\eqref{eq:derivative_differential} by $x^j$ and sum such equations over all $0 \leqslant j \leqslant 2(M + 1)$.
  We obtain
  \begin{equation}\label{eq:with_exponents_differential}
    \sum\limits_{i = 0}^M R_{i} [(e^{b_{\ell} x})^{(i)}]_{2(M + 1)} = \sum\limits_{j = 0}^{2(M + 1)} -R_{\theta_{j}}x^j.
  \end{equation}
  We apply Lemma~\ref{lem:core_differential} to~\eqref{eq:with_exponents_differential} with $a = b_\ell$ and $F = F\langle \Theta, B_{\ell + 1} \rangle$, and deduce that $b_\ell \in F\langle \Theta, B_{\ell + 1} \rangle$.
  Then there exist nonzero differential polynomials $P_1, P_2 \in F[\Theta]\{z\}$ such that 
  \begin{equation}\label{eq:formula_b_ell_differential}
  b_\ell = \frac{P_1(B_\ell)}{P_2(B_{\ell})}.
  \end{equation}
  We define $U_1 := \{\varphi \in \mathbb{Q}^{|\Theta|} \;|\; \varphi(P_2(B_{\ell})) \neq 0 \text{ and } \varphi(\theta_{-1}) \neq 0\}$.
  Since $P_2$ is a nonzero polynomial, $U_1$ is nonempty.
  For every $\varphi \in U_1$, \eqref{eq:formula_b_ell} implies that $b_\ell \in F\langle \varphi(B_{\ell+ 1}) \rangle$.
  Since $\varphi(\theta_{-1}) \neq 0$, $a_{\ell + 1} \in F\langle \varphi(B_{\ell+ 1}) \rangle$, so $F\langle \varphi(B_{\ell + 1}) \rangle = F\langle b_{\ell}, a_{\ell + 1} \rangle$.
  The claim is proved.
  
  \begin{claim}
    Let $U_2 := \{ \varphi \in \mathbb{Q}^{|\Theta|} \;|\; \varphi(B_{\ell + 1})' \neq 0\}$.
    Then $U_2$ is a nonempty Zariski open set.
  \end{claim}
  \noindent
  Since $U_2$ is defined by an inequation, it is open.
  Consider $\varphi_0 \in \mathbb{Q}^{|\Theta|}$ defined by $\varphi_0(\theta_1) = 1$ and $\varphi(\theta_j) = 0$ for $j \neq 1$.
  Then $\varphi_0(B_{\ell + 1}) = b_\ell$.
  Thus, $\varphi_0 \in U_2$, $U_2 \neq \varnothing$.
  The claim is proved.
  
  We finish the proof by considering $\varphi \in U_1 \cap U_2$ and defining $b_{\ell + 1} := \varphi(B_{\ell + 1})$.
  \end{proof}  

%%%%%%%%%%%%%%%%%%%%%%%%%%%%%%%%%%%%

\section{Proof for the general case}
\label{sec:proof_main}

\subsection{Choosing a sufficiently nonconstant element}

\begin{notation}
  Let $F$ be $\DS$-field.
  For $a_1, \ldots, a_n \in F$, we denote their Jacobian matrix by
  \[
  J(a_1, \ldots, a_n) := 
  \begin{pmatrix}
    \delta_1 a_1 & \delta_2 a_1 & \ldots & \delta_s a_1 \\
    \delta_1 a_2 & \delta_2 a_2 & \ldots & \delta_s a_2 \\
    \vdots & \vdots & \ddots & \vdots \\
    \delta_1 a_n & \delta_2 a_s & \ldots & \delta_s a_n 
  \end{pmatrix}.
  \]
  For $n = s = 0$, we will use a convention $\det J(a_1, \ldots, a_n) = 1$.
\end{notation}

%%%%%%%%%%%%%%%%%%

\begin{lemma}\label{lem:nondegenerate_field_has_nonzero_Jac}
  Let $E$ be a $\DS$-field.
  Then the following statements are equivalent
  \begin{enumerate}[label=(\arabic*), itemsep=0pt, topsep=1pt]
      \item\label{cond:nonzeroder} $\delta_1, \ldots, \delta_s$ are linearly independent over $C(E)$ (see Definition~\ref{def:constants});
      \item $\delta_1, \ldots, \delta_s$ are linearly independent over $E$;
      \item\label{cond:nonzeroJac} there exist $a_1, a_2, \ldots, a_s \in E$ such that $\det J(a_{1}, \ldots, a_{s}) \neq 0$.
  \end{enumerate}
\end{lemma}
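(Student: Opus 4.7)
The plan is to establish the cycle of implications $(2) \Rightarrow (1) \Rightarrow (2) \Rightarrow (3) \Rightarrow (2)$.

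The implication $(2) \Rightarrow (1)$ is immediate because $C(E) \subseteq E$: any $C(E)$-linear dependence among $\delta_1, \ldots, \delta_s$ would witness an $E$-linear dependence as well.

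For $(1) \Rightarrow (2)$, I would exploit the commutativity of the derivations via a minimality argument. Suppose, toward a contradiction, that there is a nontrivial $E$-linear operator relation $\sum_{i=1}^s c_i \delta_i = 0$; pick one with the smallest number of nonzero coefficients and rescale so that some coefficient equals $1$, say $c_1 = 1$. For each $j \in \{1, \ldots, s\}$, apply $\delta_j$ to the identity $\sum_i c_i \delta_i(a) = 0$ (valid for all $a \in E$) and use $\delta_j \delta_i = \delta_i \delta_j$ to cancel the second-order terms; this yields the new operator relation $\sum_i \delta_j(c_i)\delta_i = 0$, in which the coefficient of $\delta_1$ is $\delta_j(1) = 0$. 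Minimality forces this new relation to be trivial, so $\delta_j(c_i) = 0$ for every $i$ and $j$. Hence every $c_i$ lies in $C(E)$, contradicting $(1)$.

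For $(2) \Rightarrow (3)$, I would proceed by induction on $s$. The base case $s = 0$ holds by the stated convention, and the case $s = 1$ follows because $\delta_1 \neq 0$ on $E$ by $(2)$. For the inductive step, the hypothesis $(2)$ applied to the subset $\delta_1, \ldots, \delta_{s-1}$ provides $a_1, \ldots, a_{s-1} \in E$ with $\det J(a_1, \ldots, a_{s-1}) \neq 0$. Expanding $\det J(a_1, \ldots, a_s)$ along the last row exhibits it as $D(a_s)$ for the operator
\[
D \;:=\; \sum_{j=1}^s (-1)^{s+j} M_j\, \delta_j,
\]
where each minor $M_j$ depends only on $a_1, \ldots, a_{s-1}$ and the coefficient of $\delta_s$ is precisely $M_s = \det J(a_1, \ldots, a_{s-1}) \neq 0$. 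Since $\delta_1, \ldots, \delta_s$ are $E$-linearly independent by $(2)$, $D$ is not the zero map on $E$, so some $a_s \in E$ gives $D(a_s) \neq 0$, as required. Finally, $(3) \Rightarrow (2)$ is pure linear algebra: any nontrivial $E$-linear relation $\sum c_i \delta_i = 0$ evaluated on the $a_1, \ldots, a_s$ from $(3)$ produces a nonzero vector $(c_1, \ldots, c_s)^\top$ in the right kernel of $J(a_1, \ldots, a_s)$, contradicting invertibility.

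The main obstacle is $(1) \Rightarrow (2)$: it is the only step that strengthens a linear-independence statement over $C(E)$ to one over $E$, and it is precisely here that the commutativity assumption on $\Delta$ is used essentially — without $\delta_j \delta_i = \delta_i \delta_j$, the ``derivative of a relation'' ceases to be an operator relation of the same form and the minimality argument collapses.
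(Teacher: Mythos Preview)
Your argument for $(1) \Rightarrow (2)$ has a gap. By Definition~\ref{def:constants}, the constant field $C(E)$ consists of elements $a$ with $\delta_i a = 0$ for all $i$ \emph{and} $\sigma_j a = a$ for all $j$. Your minimality argument, which applies only the derivations $\delta_j$ to the relation, establishes $\delta_j(c_i) = 0$ for all $i,j$ but says nothing about $\sigma_k(c_i)$; so the conclusion ``every $c_i$ lies in $C(E)$'' is not justified when $\Sigma \neq \varnothing$.

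The fix is to run the same minimality trick with the automorphisms: from $\sum_i c_i \delta_i = 0$ and $\sigma_k \delta_i = \delta_i \sigma_k$, conjugation gives $\sigma_k\bigl(\sum_i c_i\delta_i\bigr)\sigma_k^{-1} = \sum_i \sigma_k(c_i)\delta_i = 0$, hence $\sum_i (\sigma_k(c_i) - c_i)\delta_i = 0$; the coefficient of $\delta_1$ is $\sigma_k(1)-1 = 0$, so this relation has strictly fewer nonzero terms and minimality forces $\sigma_k(c_i) = c_i$. This is exactly the second branch of the paper's Step~2 in the proof of $(1)\Rightarrow(3)$. Apart from this omission, your organisation of $(2)\Leftrightarrow(3)$ via induction on $s$ and a kernel argument is essentially the contrapositive of the paper's maximal-rank argument and its $(3)\Rightarrow(1)$.
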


\begin{proof}
  \ref{cond:nonzeroJac} $\implies$ \ref{cond:nonzeroder}. Assume that \ref{cond:nonzeroder} does not hold.
  Then there exist $\mathbf{b} = (b_1, \ldots, b_s) \in E^s$ such that $\delta|_E = 0$, where $\delta := b_1\delta_1 + \ldots + b_s\delta_s$.
  We have
  \[
  \delta (a_{1}, \ldots, a_{s})^{T} = J(a_{1}, \ldots, a_{s}) \mathbf{b}^{T}.
  \]
  Since $J(a_{1}, \ldots, a_{s})$ is nondegenerate, the latter is nonzero for every nonzero $\mathbf{b}$.
  Thus, $\delta a_{j}$ is nonzero for at least one $1 \leqslant j \leqslant s$, so we arrived at the contradiction.
  
  \ref{cond:nonzeroder} $\implies$ \ref{cond:nonzeroJac}. 
  Let $r$ be the maximal integer such that there exist $a_1, \ldots, a_r \in E$ such that $J(a_1, \ldots, a_r)$ has rank $r$.
  If $r = s$, then we are done.
  If $r < s$ then we will arrive at the contradiction with~\ref{cond:nonzeroder} in the two following steps.
  
  \begin{enumerate}[leftmargin=0cm,labelsep=0cm,align=left,label=\textbf{Step {\arabic*}}:\ ]
  \item \emph{There exist $b_1, \ldots, b_s \in E$ not all zero such that $b_1\delta_1 + \ldots + b_s\delta_s$ defines a zero derivation on $E$.}
  Reenumerating $\delta_1, \ldots, \delta_s$ if necessary, we can assume that the first $r$ columns in $J(a_1, \ldots, a_r)$ are linearly independent.
  For every $1 \leqslant i \leqslant r + 1$, we denote the determinant of the matrix consisting of the first $r + 1$ columns of $J(a_1, \ldots, a_r)$ except the $i$-th by $A_i$.
  Then $A_{r + 1} \neq 0$.
  Consider an arbitrary $a \in E$. 
  The maximality of $r$ implies that $\rank J(a, a_1, \ldots, a_r) = r$, so every $(r + 1) \times (r + 1)$-minor of $J(a, a_1, \ldots, a_r)$ is degenerate.
  Expanding the determinant of the matrix consisting of the first $r + 1$ columns of $J(a, a_1, \ldots, a_r)$ along the first row, we obtain
  \[
    A_1(\delta_1a) - A_2 (\delta_2 a) + \ldots  + (-1)^{r}A_{r + 1}(\delta_r a) = 0.
  \]
  Since $A_{r + 1} \neq 0$, $A_1\delta_1 - A_2\delta_2 + \ldots + (-1)^rA_{r + 1} \delta_r$ is a nontrivial $E$-linear combination of $\delta_1, \ldots, \delta_s$ that defines a zero derivation on $E$.
  
  \item \emph{There exist $b_1, \ldots, b_s \in C(E)$ not all zero such that $b_1\delta_1 + \ldots + b_s\delta_s$ is a zero derivation on~$E$.}
  Among all nontrivial linear combinations of $\delta_1, \ldots, \delta_s$ defining a zero derivation on $E$, consider a combination with the minimal possible number, say $q$, of nonzero coefficients.
  Reenumerating $\delta_1, \ldots, \delta_s$, we can assume that this combination is of the form $\delta = b_1\delta_1 + \ldots + b_q \delta_q$ for some nonzero $b_1, \ldots, b_q \in E$.
  Moreover, by dividing the combination by $b_1$, we can further assume that $b_1 = 1$.
  If $b_2, \ldots, b_q \in C(E)$, then we are done.
  If at least one of them, say $b_2$, does not belong to $C(E)$, then there are two options:
  \begin{itemize}
      \item There exists $1 \leqslant i \leqslant s$ such that $\delta_i b_2 \neq 0$.
      Then consider
      \[
      [\delta_i, \delta] = (\delta_i b_2) \delta_2 + \ldots + (\delta_i b_q) \delta_q.
      \]
      Then $[\delta_i, \delta]$ is a nontrivial $E$-linear combination of $\delta_1, \ldots, \delta_s$ such that $[\delta_i, \delta]|_E = 0$.
      This contradicts the minimality of $q$.
      
      \item There exists $1 \leqslant i \leqslant t$ such that $\sigma_i b_2 \neq b_2$.
      Then consider
      \[
      \sigma_i\delta\sigma_i^{-1} - \delta = (\sigma_i b_2 - b_2) \delta_2 + \ldots + (\sigma_i b_q - b_q) \delta_q.
      \]
      Then $\sigma_i\delta\sigma_i^{-1} - \delta$ is a nontrivial $E$-linear combination of $\delta_1, \ldots, \delta_s$ such that $(\sigma_i\delta\sigma_i^{-1} - \delta)|_E = 0$.
      This contradicts the minimality of $q$.
  \end{itemize}
  \end{enumerate}
\end{proof}

%%%%%%%%%%%%%%%

\begin{lemma}\label{lem:nonzeroJac_one_element}
  Let $E$ be a $\DS$-field. Let $a_1, \ldots, a_n$ be elements of $E$ such that $s \leqslant n$ and $\det J(a_1, \ldots, a_s) \neq 0$.
  Then there exists a nonempty Zariski open subset $U \subset \{ P \in \mathbb{Q}[x_1, \ldots, x_n] \;|\; \deg P \leqslant 2\}$ such that, for every $P \in U$,
  \begin{equation}\label{eq:nonzeroJac}
  \det J(\delta_1 P_a, \ldots, \delta_s P_a) \neq 0, \text{ where } P_a := P(a_1, \ldots, a_n).
  \end{equation}
\end{lemma}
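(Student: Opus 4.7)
The plan is to parametrize degree-$\leq 2$ polynomials as $P(x) = \tfrac12 x^T A x + b^T x + d$ with symmetric $A$ and treat $(A, b, d)$ as the $\binom{n+2}{2}$-dimensional parameter space. Each entry of the matrix $J(\delta_1 P_a, \ldots, \delta_s P_a)$ is a polynomial in $(A, b, d)$ with coefficients in $E$, hence so is $F(A, b, d) := \det J(\delta_1 P_a, \ldots, \delta_s P_a) \in E[A, b, d]$; this makes~\eqref{eq:nonzeroJac} a Zariski open condition and defines $U$ as a Zariski open subset. To show $U$ is nonempty it suffices to verify $F \not\equiv 0$ as an element of $E[A, b, d]$: since $\mathbb{Q} \subset E$ is infinite, a nonzero polynomial over $E$ in these parameters cannot vanish identically on $\mathbb{Q}^{\binom{n+2}{2}}$.

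The coefficients $A, b, d$ are $\DS$-constants by construction, so
\[
    \delta_i P_a \;=\; (Aa + b)^T \delta_i a, \qquad \delta_k \delta_i P_a \;=\; (A \delta_k a)^T \delta_i a \;+\; (Aa + b)^T \delta_k \delta_i a.
\]
To see $F \not\equiv 0$, I would evaluate at the $E$-valued (not rational) point $A^* := \operatorname{diag}(1, \ldots, 1, 0, \ldots, 0)$ with $s$ ones, $b^* := -A^* a = (-a_1, \ldots, -a_s, 0, \ldots, 0)$, and any $d^*$. At this point $A^* a + b^* = 0$, so the second summand vanishes and the first becomes $\sum_{j = 1}^s (\delta_k a_j)(\delta_i a_j) = (J_s^T J_s)_{ij}$, where $J_s := J(a_1, \ldots, a_s)$. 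Hence $F(A^*, b^*, d^*) = \det(J_s^T J_s) = (\det J_s)^2 \neq 0$ by the hypothesis, so $F$ is a nonzero polynomial and a suitable rational $(A_0, b_0, d_0) \in \mathbb{Q}^{\binom{n+2}{2}}$ exists.

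The one subtlety to watch for is the distinction between (i) $F \in E[A, b, d]$ as the formal polynomial obtained by treating $A, b, d$ as $\DS$-constant indeterminates and (ii) the genuine Jacobian determinant one would compute for the element $\tfrac12 x^T A^* x + (b^*)^T x + d^* \in E[x]$: because $b^*$ is not a constant of $E$, an honest computation would pick up extra $\delta_i(b^*)$ terms and would no longer simplify to $J_s^T J_s$. The evaluation step above is purely the substitution $(A, b, d) \mapsto (A^*, b^*, d^*)$ into the polynomial $F$, not the formation of a new differential polynomial, and making this distinction explicit is the main thing to monitor in writing the proof.
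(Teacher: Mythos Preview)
Your proposal is correct and follows essentially the same approach as the paper: both treat the coefficients of $P$ as $\DS$-constant indeterminates, view $\det J(\delta_1 P_a,\ldots,\delta_s P_a)$ as a polynomial in these indeterminates over $E$, and verify nonvanishing by evaluating at the $E$-valued point where the linear part equals $-A^*a$, which collapses the Hessian to $J_s^T J_s$ (the paper fixes the quadratic part as $x_1^2+\cdots+x_s^2$ and varies only the linear coefficients $\lambda_\ell$, setting $\lambda_\ell=-2a_\ell$, but the mechanism is identical). Your explicit remark on the distinction between substitution into $F$ and recomputing the Jacobian with nonconstant $b^*$ is exactly the point the paper leaves implicit.
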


\begin{proof}
  The inequation~\eqref{eq:nonzeroJac} defines an open subset in $\{ P \in \mathbb{Q}[x_1, \ldots, x_n] \;|\; \deg P \leqslant 2\}$.
  It remains to show that this subset is nonempty.
  We introduce new variables $\Lambda := \{\lambda_1, \ldots, \lambda_s\}$ and set $\delta_i \lambda_j = 0$ for every $1 \leqslant i, j \leqslant s$. 
  Consider
  \[
  P(x_1, \ldots, x_n) = \sum\limits_{\ell = 1}^s \lambda_\ell a_\ell + a_1^2 + \ldots + a_s^2.
  \]
  Then $J(\Lambda) := \det J(\delta_1 P_a, \ldots, \delta_s P_a) \in E[\Lambda]$.
  We will consider $J(\Lambda)$ as a polynomial in $\Lambda$ over $E$ and show that $J(\Lambda) \neq 0$.
  $J(\Lambda)$ is the determinant of the matrix whose $(i, j)$-th entry is 
  \begin{equation}\label{eq:ij_entry}
  \delta_i\delta_j \left( \sum\limits_{\ell = 1}^s \lambda_\ell a_\ell + a_1^2 + \ldots + a_s^2 \right) = \sum\limits_{\ell = 1}^s \lambda_\ell \delta_i\delta_j a_\ell + \sum\limits_{\ell = 1}^s (2a_\ell\delta_i\delta_j a_\ell + 2(\delta_i a_\ell) (\delta_j a_\ell)).
  \end{equation}
  If we set $\lambda_\ell = -2a_\ell$ for $1 \leqslant \ell \leqslant s$, then the right-hand side of~\eqref{eq:ij_entry} can be written as
  \[
  \sum\limits_{\ell = 1}^s -2a_\ell \delta_i\delta_j a_\ell + \sum\limits_{\ell = 1}^s (2a_\ell\delta_i\delta_j a_\ell + 2(\delta_i a_\ell) (\delta_j a_\ell)) = 2 \sum\limits_{\ell = 1}^s (\delta_i a_\ell) (\delta_j a_\ell).
  \]
  Thus, we can write
  \[
  J(-2a_1, \ldots, -2a_s) = 2 \det\left( J(a_1, \ldots, a_s) J^T(a_1, \ldots, a_s) \right) \neq 0.
  \]
  Since $J(\Lambda)$ is a nonzero polynomial, there exist $\lambda_1^\ast, \ldots, \lambda_s^\ast \in \mathbb{Q}$ such that $J(\lambda_1^\ast, \ldots, \lambda_s^\ast) \neq 0$.
  Then $P^\ast := \lambda_1^\ast x_1 + \ldots + \lambda_s^\ast x_s + x_1^2 + \ldots + x_s^2$ is a witness of the nonemptyness of $U$.
\end{proof}

%%%%%%%%%%%%%%%%%%%%%%%

\begin{lemma}\label{lem:existence_nonperiodic_and_nonzeroJac}
  Consider an extension of $\DS$ fields $F \subset E$ such that 
  \begin{itemize}
    \item $E = F\langle a_1, \ldots, a_n \rangle$ with $s \leqslant n$;
    \item $E$ is nondegenerate (see Definition~\ref{def:nondegenerate});
    \item $\det J(a_1, \ldots, a_s) \neq 0$.
  \end{itemize}
  Then, for every $m$, there exists a polynomial $P \in F[x_1, \ldots, x_n]$ of degree at most two
  such that $P_a := P(a_1, \ldots, a_n)$ is $m$-nonperiodic (see Definition~\ref{def:nonperiodic}) and $\det J(\delta_1 P_a, \ldots, \delta_s P_a) \neq 0$.
\end{lemma}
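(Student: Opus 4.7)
The plan is a perturbative genericity argument: start with a polynomial $P_0$ satisfying the Jacobian condition, adjoin degree-$\leqslant 1$ witness polynomials $Q^{\bm{\gamma}}$ to force nonperiodicity for each problematic shift $\bm{\gamma}$, and combine them with parameters ranging in $\mathbb{Q}$ so that every required condition becomes an honest nonzero polynomial inequation in those parameters. The detour through $\mathbb{Q}$-parameters (rather than $F$-parameters) is what makes everything linear: the maps $P \mapsto \sigma^{\bm{\gamma}} P_a$ are only additive, not $F$-linear, because $\sigma^{\bm{\gamma}}$ acts nontrivially on $F$ in general.

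For the starting point, Lemma~\ref{lem:nonzeroJac_one_element} produces $P_0 \in \mathbb{Q}[x_1,\ldots,x_n]$ of degree at most $2$ with $\det J(\delta_1(P_0)_a,\ldots,\delta_s(P_0)_a) \neq 0$. Next, observe that $P_a$ is $m$-nonperiodic iff $\sigma^{\bm{\gamma}} P_a \neq P_a$ for every nonzero $\bm{\gamma}$ in the finite set $T := \{\bm{\alpha}-\bm{\beta} : \bm{\alpha},\bm{\beta} \in \mathbb{Z}^t_{\geqslant 0}(m),\; \bm{\alpha} \neq \bm{\beta}\}$. For each such $\bm{\gamma}$, I produce a witness $Q^{\bm{\gamma}} \in F[x_1,\ldots,x_n]$ of degree at most $1$ with $(\sigma^{\bm{\gamma}}-\operatorname{id})(Q^{\bm{\gamma}}_a) \neq 0$ by case analysis: if $\sigma^{\bm{\gamma}}|_F \neq \operatorname{id}$, pick any $f \in F$ with $\sigma^{\bm{\gamma}} f \neq f$ and set $Q^{\bm{\gamma}} := f$; otherwise, since $\sigma^{\bm{\gamma}} \neq \operatorname{id}$ on $E$ by condition~\ref{item:three_sigma} and $\sigma^{\bm{\gamma}}$ commutes with every operator in $\Delta \cup \Sigma$, if $\sigma^{\bm{\gamma}}$ fixed every $a_j$ it would fix every $\delta^{\bm{\epsilon}}\sigma^{\bm{\eta}} a_j$ and, combined with $\sigma^{\bm{\gamma}}|_F = \operatorname{id}$, would fix all of $E = F\langle a_1,\ldots,a_n\rangle$, a contradiction; hence some $a_j$ satisfies $\sigma^{\bm{\gamma}} a_j \neq a_j$, and we take $Q^{\bm{\gamma}} := x_j$.

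Finally, for $t = (t_{\bm{\gamma}})_{\bm{\gamma} \in T} \in \mathbb{Q}^{|T|}$, set $P(t) := P_0 + \sum_{\bm{\gamma} \in T} t_{\bm{\gamma}} Q^{\bm{\gamma}}$, a polynomial in $F[x_1,\ldots,x_n]$ of degree at most $2$. Because each $t_{\bm{\gamma}} \in \mathbb{Q}$ is fixed by every operator in $\Delta \cup \Sigma$, the Jacobian $D(t) := \det J(\delta_1 P(t)_a, \ldots, \delta_s P(t)_a)$ and each difference $L_{\bm{\gamma}_0}(t) := (\sigma^{\bm{\gamma}_0}-\operatorname{id}) P(t)_a$ depend polynomially on $t$ with coefficients in $E$. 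One has $D(\bm{0}) \neq 0$ by the choice of $P_0$, and $L_{\bm{\gamma}_0}$ is affine linear in $t$ with the coefficient of $t_{\bm{\gamma}_0}$ equal to $(\sigma^{\bm{\gamma}_0}-\operatorname{id})(Q^{\bm{\gamma}_0}_a) \neq 0$; so $D$ and each $L_{\bm{\gamma}_0}$ are nonzero as polynomials in $E[t]$, and their product is nonzero as well. Since $\mathbb{Q}$ is infinite, this product does not vanish identically on $\mathbb{Q}^{|T|}$, and specializing to any nonvanishing point yields a $P$ meeting both requirements. The step I expect to carry the actual content is the second case of the witness construction, where condition~\ref{item:three_sigma} together with the commutativity of all operators in $\Delta\cup\Sigma$ is used to convert the global statement ``$\sigma^{\bm{\gamma}}$ acts nontrivially on $E$'' into the local statement ``$\sigma^{\bm{\gamma}}$ moves some single generator $a_j$''; everything else is bookkeeping plus the classical ``nonzero polynomial over an infinite field is not identically zero'' principle.
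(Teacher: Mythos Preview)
Your proof is correct and follows essentially the same approach as the paper: both arguments produce, for each problematic shift, a degree-$\leqslant 1$ witness (either a constant from $F$ or one of the $x_j$), combine these with a Jacobian-good polynomial using $\mathbb{Q}$-parameters, and invoke the standard ``finitely many nonzero polynomials over an infinite field have a common nonvanishing point'' argument. The only cosmetic difference is that the paper packages the $F$-constant witnesses by enlarging the variable set to $x_1,\ldots,x_N$ with $a_{n+1},\ldots,a_N \in F$ and working in $\mathbb{Q}[x_1,\ldots,x_N]$ before substituting back, whereas you work directly in $F[x_1,\ldots,x_n]$ over a smaller $\mathbb{Q}$-parameter space; your version is slightly more direct, and you spell out the commutativity argument for why $\sigma^{\bm{\gamma}}$ must move some $a_j$, which the paper leaves implicit.
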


\begin{proof}
  We will extend the set $a_1, \ldots, a_n$ of generators of $E$ over $F$ by some elements of $F$ as follows. 
  For every pair $\bm{\alpha}, \bm{\beta} \in \mathbb{Z}_{\geqslant 0}^t (m)$ such that $\bm{\alpha} \neq \bm{\beta}$, since $\sigma^{\bm{\alpha} - \bm{\beta}}|_E \neq \operatorname{id}$, there are two options:
  \begin{itemize}
      \item if $\sigma^{\bm{\alpha} - \bm{\beta}}|_F \neq \operatorname{id}$, then we take $a \in F$ such that $\sigma^{\bm{\alpha}} a \neq \sigma^{\bm{\beta}} a$ and add it to the set of generators;
      \item otherwise, if $\sigma^{\bm{\alpha} - \bm{\beta}}|_F = \operatorname{id}$, there exists $1 \leqslant i \leqslant n$ such that $\sigma^{\bm{\alpha} - \bm{\beta}} a_i \neq a_i$.
  \end{itemize}
  Using this procedure we construct an extended set of generators $a_1, \ldots, a_N$ such that 
  \begin{itemize}
      \item $a_{n + 1}, \ldots, a_N \in F$ and
      \item for every pair $\bm{\alpha}, \bm{\beta} \in \mathbb{Z}_{\geqslant 0}^t (m)$ such that $\bm{\alpha} \neq \bm{\beta}$, there exists $1 \leqslant i(\bm{\alpha}, \bm{\beta}) \leqslant N$ such that $\sigma^{\bm{\alpha}} a_{ i(\bm{\alpha}, \bm{\beta}) } \neq \sigma^{\bm{\beta}} a_{ i(\bm{\alpha}, \bm{\beta}) }$.
  \end{itemize}
  Let $V := \{P \in \mathbb{Q}[x_1, \ldots, x_N] \;|\; \deg P \leqslant 2\}$.
  Let $U \subset V$ be a nonempty open Zariski subset given by Lemma~\ref{lem:nonzeroJac_one_element}.
  
  For every pair $\bm{\alpha}, \bm{\beta} \in \mathbb{Z}_{\geqslant 0}^t (m)$ such that $\bm{\alpha} \neq \bm{\beta}$, consider
  \[
  U_{\bm{\alpha}, \bm{\beta}} := \{ P \in V \;|\; \sigma^{\bm{\alpha}}P(a_1, \ldots, a_N) \neq \sigma^{\bm{\beta}}P(a_1, \ldots, a_N)\} \subset V.
  \]
  Since $U_{\bm{\alpha}, \bm{\beta}}$ is defined by an inequation, it is an open subset of $V$.
  Moreover, since $x_{ i(\bm{\alpha}, \bm{\beta}) } \in U_{\bm{\alpha}, \bm{\beta}}$, $U_{\bm{\alpha}, \bm{\beta}} \neq \varnothing$.
  Then the intersection of $U$ and all the subsets $U_{\bm{\alpha}, \bm{\beta}}$ with $\bm{\alpha}, \bm{\beta} \in \mathbb{Z}_{\geqslant 0}^t$ and $\bm{\alpha} \neq \bm{\beta}$
  is a nonempty open subset of $V$.
  Let $P_0$ be an element of this subset.
  Then $P_1 := P_0(x_1, \ldots, x_n, a_{n + 1}, \ldots, a_N) \in F[x_1, \ldots, x_n]$ is a desired polynomial.
\end{proof}

%%%%%%%%%%%%%%%%%%%%%%%%%%%%%%%%%%%%

\subsection{Core lemmas}

The following lemma generalizes Lemma~\ref{lem:operator_1d}.

\begin{lemma}\label{lem:operator}
  Let $K$ be a field. 
  We denote the partial derivatives of $K[\![ x_1, \ldots, x_n ]\!]$ with respect to $x_1, \ldots, x_n$ by $\partial_1, \ldots, \partial_n$, respectively.
  Let $D_1 \in K[\partial_{1}], \ldots, D_n \in K[\partial_n]$ be differential operators of order at most $m$.
  For every $f \in K[\![ x_1, \ldots, x_n ]\!]$,
  \[
  \bigl( D_1f = \ldots = D_n f = 0 \;\; \& \;\; [f]_m = 0 \bigr) \implies f = 0.
  \]
\end{lemma}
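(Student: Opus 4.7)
The plan is to interpret each equation $D_i f = 0$ as a linear recurrence on the Taylor coefficients of $f$ along the $i$-th coordinate direction, and then use these $n$ recurrences jointly to propagate the vanishing of the coefficients in the box $\{0,\ldots,m\}^n$ (guaranteed by $[f]_m = 0$) to all of $\mathbb{Z}_{\geqslant 0}^n$. This is the natural multivariate analogue of the argument in Lemma~\ref{lem:operator_1d}, the only new ingredient being a careful choice of induction parameter so that one recurrence suffices at each step.

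Concretely, writing $f = \sum_{\bm{k}} c_{\bm{k}} x^{\bm{k}}$ and $D_i = \sum_{j=0}^{d_i} a_{ij}\partial_i^j$ with $d_i \leqslant m$ and $a_{i,d_i}\neq 0$, extracting the coefficient of $x^{\bm{k}'}$ from $D_i f = 0$ and solving for the top term yields $c_{\bm{k}' + d_i \bm{1}_i}$ as an explicit $K$-linear combination of $c_{\bm{k}' + j \bm{1}_i}$ for $0 \leqslant j < d_i$, valid for every $\bm{k}'\in\mathbb{Z}_{\geqslant 0}^n$. (If $d_i = 0$, then $D_i$ is nonzero scalar multiplication and $D_i f = 0$ gives $f = 0$ outright, so I may assume $d_i \geqslant 1$.)

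I would then induct on the \emph{excess} function
\[ E(\bm{k}) \;:=\; \sum_{i=1}^n \max(k_i - m,\; 0). \]
The base case $E(\bm{k}) = 0$ is exactly $\bm{k}\in\{0,\ldots,m\}^n$, where $c_{\bm{k}} = 0$ by hypothesis. For the inductive step at some $\bm{k}$ with $E(\bm{k}) > 0$, pick any coordinate $i$ with $k_i > m$; since $k_i > m \geqslant d_i$, the recurrence at $\bm{k}' := \bm{k} - d_i\bm{1}_i$ expresses $c_{\bm{k}}$ as a $K$-linear combination of $c_{\bm{k} - \ell \bm{1}_i}$ for $1 \leqslant \ell \leqslant d_i$. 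Subtracting $\ell\geqslant 1$ from a coordinate that already exceeds $m$ strictly decreases $E$ (by $\min(\ell,\,k_i-m) \geqslant 1$), so the inductive hypothesis gives $c_{\bm{k} - \ell \bm{1}_i} = 0$ for each such $\ell$, and hence $c_{\bm{k}} = 0$.

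The only real design decision is the choice of induction parameter: more obvious candidates like $|\bm{k}|$ or $\max_i k_i$ need not strictly decrease at each step (other coordinates may still exceed $m$), whereas the cumulative excess $E(\bm{k})$ drops whenever any single over-threshold coordinate is decremented, which is all the recurrence ever does. Beyond setting up this bookkeeping, I do not anticipate any serious obstacle.
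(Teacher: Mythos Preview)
Your argument is correct and matches the paper's: both extract the linear recurrence in the $i$-th direction from $D_i f = 0$ and propagate the vanishing of the Taylor coefficients inductively outward from the box $\{0,\ldots,m\}^n$. The paper, however, simply inducts on $|\bm{k}|$ --- contrary to your remark, this \emph{does} strictly decrease under $\bm{k}\mapsto \bm{k}-\ell\bm{1}_i$ for $\ell\geqslant 1$ (at each step one either has $\bm{k}$ in the box or some $k_i>m$), so your excess function $E$ works but is more elaborate than necessary.
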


\begin{proof}
  Let 
  \[
  f = \sum\limits_{\bm{k} \in \mathbb{Z}^n_{\geqslant 0}} c_{\bm{k}} x^{\bm{k}} \in K[\![x_1, \ldots, x_n]\!].
  \]
  We will show that $c_{\bm{k}} = 0$ for every $\bm{k} \in \mathbb{Z}_{\geqslant 0}^n$ by induction on $|\bm{k}|$.
  Let $\bm{k} = (k_1, \ldots, k_n)$. 
  If $k_i < m$ for every $1 \leqslant i \leqslant n$, then $c_{\bm{k}} = 0$ because $[f]_m = 0$.
  Assume that there exists $1 \leqslant i \leqslant n$ such that $k_i \geqslant m$.
  Then $D_i f = 0$ implies that $c_{\bm{k}}$ is a linear combination of $c_{\bm{k} - \bm{1}_i}, \ldots, c_{\bm{k} - m\bm{1}_i}$, where $\bm{1}_i$ is the $i$-th basis vector of $\mathbb{Z}^n$.
  Due to the induction hypothesis, these coefficients are all equal to zero, so $c_{\bm{k}} = 0$.
\end{proof}

%%%%%%%%%%%%%%%%%%%%%%%%%%%%%%

For every positive integer $n$, throughout the rest of the paper, for a $\DS$-field $E$, we extend the operators from $E$ to $E[\![ x_1, \ldots, x_{n} ]\!]$ by 
\begin{equation}\label{eq:extend_der_gen}
\phi\bigl( \sum\limits_{\bm{k} \in \mathbb{Z}^{n}_{\geqslant 0}}^\infty c_{\bm{k}} x^{\bm{k}} \bigr) = \sum\limits_{\bm{k} \in \mathbb{Z}^{n}_{\geqslant 0}}^\infty \phi(c_{\bm{k}})x^{\bm{k}} \quad \text{ for every } \sum\limits_{\bm{k} \in \mathbb{Z}^{n}_{\geqslant 0}}^\infty c_{\bm{k}} x^{\bm{k}} \in E[\![ x_1, \ldots, x_{n} ]\!] \text{ and } \phi \in \Delta \cup \Sigma.
\end{equation}

The following lemma generalizes Lemma~\ref{lem:independence_ordinary}.

{
\begin{lemma}\label{lem:independence}
  Let $E$ be a $\DS$-field
  and $a_1, \ldots, a_n$ be elements of $E$ such that $\rank J(a_1, \ldots, a_n) = s$.
  We extend $\Delta \cup\Sigma$ to $E[\![x_1, \ldots, x_n]\!]$ as in~\eqref{eq:extend_der_gen}.
  For $m \in \mathbb{Z}_{\geqslant 0}$, we introduce the following subset of $E[\![x_1, \ldots, x_n]\!]$
  \[
  S_{m} := \left\{ \delta^{\bm{\alpha}} e^{a_1x_1 + \ldots + a_nx_n} \;|\; \bm{\alpha} \in \mathbb{Z}_{\geqslant 0}^s(m) \right\}.
  \]
  Then the elements of $S_m$ are linearly independent over $E$.
\end{lemma}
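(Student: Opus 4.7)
The plan is to exploit the factorization $\delta^{\bm{\alpha}} f = P_{\bm{\alpha}} \cdot f$ in $E[\![x_1, \ldots, x_n]\!]$, where $f := e^{a_1 x_1 + \ldots + a_n x_n}$ and $P_{\bm{\alpha}} \in E[x_1, \ldots, x_n]$. Since the extended derivations act only on the coefficients of a series (so $\delta_i x_j = 0$), the formal chain rule applied to $f$ gives $\delta_i f = L_i f$ for $L_i := \sum_{j=1}^n (\delta_i a_j)\, x_j$. The product rule $\delta_i(P f) = (\delta_i P + P L_i) f$ together with the fact that $\delta_i$ preserves the $x$-degree then shows, by induction on $|\bm{\alpha}|$, that $P_{\bm{\alpha}}$ has degree exactly $|\bm{\alpha}|$ with top-degree homogeneous component equal to the monomial $L^{\bm{\alpha}} := L_1^{\alpha_1} \cdots L_s^{\alpha_s}$.

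Next I would translate the assumption $\rank J(a_1, \ldots, a_n) = s$ into a statement about the $L_i$: any nontrivial relation $\sum_i c_i L_i = 0$ with $c_i \in E$ would give, coefficient by coefficient in $x_j$, a nontrivial dependence among the $s$ columns of $J(a_1, \ldots, a_n)$, contradicting the hypothesis. Hence $L_1, \ldots, L_s$ are $E$-linearly independent linear forms, which implies that the monomials $\{L^{\bm{\alpha}} : |\bm{\alpha}| = d\}$ are $E$-linearly independent in $E[x_1, \ldots, x_n]$ for every $d$ (one may complete $L_1, \ldots, L_s$ to an $E$-basis of the linear part, after which the $L^{\bm{\alpha}}$'s become distinct monomials in a polynomial subring).

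Combining these ingredients concludes the proof. Assume $\sum_{\bm{\alpha}} c_{\bm{\alpha}} \delta^{\bm{\alpha}} f = 0$ with not all $c_{\bm{\alpha}} \in E$ zero, and let $d$ be the maximum of $|\bm{\alpha}|$ over the nonzero $c_{\bm{\alpha}}$. Since $f$ has constant term $1$, it is a unit in $E[\![x_1, \ldots, x_n]\!]$, so cancellation yields $\sum_{\bm{\alpha}} c_{\bm{\alpha}} P_{\bm{\alpha}} = 0$ in $E[x_1, \ldots, x_n]$. Extracting the homogeneous degree-$d$ component gives $\sum_{|\bm{\alpha}| = d} c_{\bm{\alpha}} L^{\bm{\alpha}} = 0$, which by the previous paragraph forces every $c_{\bm{\alpha}}$ with $|\bm{\alpha}| = d$ to vanish, contradicting the choice of $d$. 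The only mildly subtle step is the formal verification that $\delta_i(e^g) = \delta_i(g)\, e^g$ holds for $g = \sum_j a_j x_j$ under the extension~\eqref{eq:extend_der_gen} — a routine term-by-term check on the series $\sum_{\bm{k}} (a^{\bm{k}}/\bm{k}!)\, x^{\bm{k}}$ — and I do not expect a genuine obstacle beyond bookkeeping.
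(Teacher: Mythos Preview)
Your proof is correct and follows essentially the same approach as the paper. The paper phrases the argument as an induction on $m$, working modulo $V_e := \{P e^{(\mathbf{a},\mathbf{x})} : \deg P \leqslant m\}$ and observing that $\delta^{\bm{\alpha}} e^{(\mathbf{a},\mathbf{x})} \equiv L^{\bm{\alpha}} e^{(\mathbf{a},\mathbf{x})} \pmod{V_e}$ for $|\bm{\alpha}| = m+1$; you instead factor out the unit $f$ once and for all and extract the top homogeneous component of $\sum c_{\bm{\alpha}} P_{\bm{\alpha}}$. Both reduce to the same endgame: the monomials $L^{\bm{\alpha}}$ with $|\bm{\alpha}| = d$ are $E$-linearly independent because the linear forms $L_i = \delta_i(\mathbf{a},\mathbf{x})$ are independent by the rank hypothesis.
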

}

{
\begin{proof}
  We will use notation $(\mathbf{a}, \mathbf{x}) := a_1x_1 + \ldots + a_nx_n$.
  We will prove the lemma by induction on $m$.
  The base case $m = 0$ is true because $S_0 = \{e^{(\mathbf{a}, \mathbf{x})}\}$.
  
  Assume that we have proved the lemma for some $m \geqslant 0$.
  Let $V$ be a space of all polynomials from $E[x_1, \ldots, x_n]$ of degree at most $m$.
  Then $S_m \subset V_e := Ve^{(\mathbf{a}, \mathbf{x})}$.
  Hence it is sufficient to prove that the elements of $S_{m + 1} \setminus S_m$ are linearly independent modulo $V_e$.
  For every $\bm{\alpha} = (\alpha_1, \ldots, \alpha_s) \in \mathbb{Z}_{\geqslant 0}^s(m + 1)$, we have
  \[
    \delta^{\bm{\alpha}}e^{(\mathbf{a}, \mathbf{x})} \equiv \prod\limits_{i = 1}^s (\delta_i (\mathbf{a}, \mathbf{x}))^{\alpha_i}e^{(\mathbf{a}, \mathbf{x})} \pmod{V_e}.
  \]
  Thus, it is sufficient to prove that the elements of $\{ \prod\limits_{i = 1}^s (\delta_i (\mathbf{a}, \mathbf{x}))^{\alpha_i} \mid \bm{\alpha} \in \mathbb{Z}_{\geqslant 0}^s(m + 1)\}$ are linearly independent over $E$.
  Assume the contrary.
  Then there exists a nonzero homogeneous polynomial $P \in E[y_1, \ldots, y_s]$ of degree $m + 1$ such that 
  \[
    P(\delta_1(\mathbf{a}, \mathbf{x}), \ldots, \delta_s (\mathbf{a}, \mathbf{x})) = 0.
  \]
  However, since $\rank J(a_1, \ldots, a_n) = s$, linear forms $\delta_1 (\mathbf{a}, \mathbf{x}), \ldots, \delta_s(\mathbf{a}, \mathbf{x})$ are linearly independent, so $P$ cannot vanish on them.
\end{proof}}

%%%%%%%%%%%%%%%%%%%%%%%%%%%%%%%%%%%%%

The following lemma generalizes Lemma~\ref{lem:core_differential}.

\begin{lemma}\label{lem:core}
  Let $F \subset E$ be an extension of $\DS$-fields.
  Let $a_1, \ldots, a_{n}$ be elements of $E$ such that
  \begin{itemize}
      \item ${\rank} J(a_1, \ldots, a_{n}) { = s}$;
      \item $a_1$ is $2M$-nonperiodic.
  \end{itemize}
  Let $N = 2(M + 1)^{t + 1}$. 
  We extend $\Delta \cup \Sigma$ to $E[\![x_1, \ldots, x_{n}]\!]$ as in~\eqref{eq:extend_der_gen}.
  If there exists a nontrivial $F$-linear combination of
  \begin{equation}\label{eq:set_functions}
  \left\{ [\delta^{\bm{\alpha}}\sigma^{\bm{\beta}} e^{a_1x_1 + \ldots + a_{n} x_{n}}]_{N} \;|\; \bm{\alpha} \in \mathbb{Z}_{\geqslant 0}^s(M),\; \bm{\beta} \in \mathbb{Z}_{\geqslant 0}^t(M) \right\}
  \end{equation}
  that belongs to $F[x_1, \ldots, x_{n}]$, then $a_1 \in F$.
\end{lemma}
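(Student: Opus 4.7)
The plan is to follow the three-step structure of the proof of Lemma~\ref{lem:core_differential}, with bookkeeping adjusted for the additional operators. For Step~1, I would unfold the nontrivial $F$-linear combination by grouping terms with the same automorphism shift. Since each $\sigma_j$ acts trivially on $x_1, \ldots, x_n$, the expression $\delta^{\bm{\alpha}}\sigma^{\bm{\beta}} e^{a_1 x_1 + \ldots + a_n x_n}$ equals $P_{\bm{\alpha},\bm{\beta}}(x)\, e^{\sigma^{\bm{\beta}}(a_1) x_1 + \ldots + \sigma^{\bm{\beta}}(a_n) x_n}$ for some $P_{\bm{\alpha},\bm{\beta}} \in E[x_1, \ldots, x_n]$ of degree at most $|\bm{\alpha}|$. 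Collecting by $\bm{\beta}$ yields
\[
S := \sum_{\bm{\beta} \in L} C_{\bm{\beta}}(x)\, e^{\sigma^{\bm{\beta}}(a_1) x_1 + \ldots + \sigma^{\bm{\beta}}(a_n) x_n}, \qquad \deg C_{\bm{\beta}} \leqslant M,
\]
with $L \subseteq \mathbb{Z}_{\geqslant 0}^t(M)$ nonempty and each $C_{\bm{\beta}}$ for $\bm{\beta} \in L$ nonzero by Lemma~\ref{lem:independence} applied to $\sigma^{\bm{\beta}}(a_1), \ldots, \sigma^{\bm{\beta}}(a_n)$ (whose Jacobian has the same rank $s$ as $J(a_1, \ldots, a_n)$ because $\sigma^{\bm{\beta}}$ is a field automorphism).

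For Step~2, fix an algebraic closure $\overline{E}$ of $E$ and an arbitrary $\tau \in \operatorname{Aut}(\overline{E}/F)$. Since the linear combination has coefficients in $F$ and $\tau$ fixes $[S]_N \in F[x_1, \ldots, x_n]$, one has $[S - \tau(S)]_N = 0$. For each $i \in \{1, \ldots, n\}$, define the annihilator
\[
D_i := \prod_{\bm{\beta} \in L}\bigl(\partial_i - \sigma^{\bm{\beta}}(a_i)\bigr)^{M+1} \cdot \prod_{\bm{\beta} \in L}\bigl(\partial_i - \tau(\sigma^{\bm{\beta}}(a_i))\bigr)^{M+1} \in \overline{E}[\partial_i].
\]
It kills every exponential summand appearing in $S$ and in $\tau(S)$ (since each $C_{\bm{\beta}}$ has $x_i$-degree at most $M$) and its order is at most $2|L|(M+1) \leqslant 2(M+1)^{t+1} = N$. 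Applying Lemma~\ref{lem:operator} with $K = \overline{E}$ and $m = N$ to $S - \tau(S)$ then forces $\tau(S) = S$.

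Step~3, which I expect to be the main obstacle, is to extract $\tau(a_1) = a_1$ from $\tau(S) = S$. By linear independence of distinct exponentials in $\overline{E}[\![x_1, \ldots, x_n]\!]$ together with the fact (from $M$-nonperiodicity of $a_1$) that the exponent vectors $\sigma^{\bm{\beta}}(a) = (\sigma^{\bm{\beta}}(a_1), \ldots, \sigma^{\bm{\beta}}(a_n))$ are pairwise distinct for $\bm{\beta} \in L$, the identity $\tau(S) = S$ forces a permutation $\phi$ of $L$ with $\tau(\sigma^{\bm{\beta}}(a)) = \sigma^{\phi(\bm{\beta})}(a)$ and matching coefficients $\tau(C_{\bm{\beta}}) = C_{\phi^{-1}(\bm{\beta})}$. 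The difficulty is that $\tau$ need not commute with the $\sigma_j$, so $\tau(\sigma^{\bm{\beta}}(a_1)) = \sigma^{\phi(\bm{\beta})}(a_1)$ does not immediately yield $\tau(a_1) = a_1$. The enhanced $2M$-nonperiodicity provides the extra rigidity needed: by iterating $\tau$ and tracking the orbit of $a_1$, the shifts $\phi^k(\bm{\beta}) - \bm{\beta}$ stay within a range where the $2M$-nonperiodicity of $a_1$ can be invoked, and this should force $\phi_\tau = \operatorname{id}$ for every $\tau \in \operatorname{Aut}(\overline{E}/F)$. Consequently $\tau(a_1) = a_1$ for every such $\tau$, which in characteristic zero forces $a_1 \in F$.
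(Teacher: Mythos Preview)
Your Steps~1 and~2 are correct and match the paper's argument closely: collect terms by shift $\bm{\beta}$, use Lemma~\ref{lem:independence} (applied to the shifted tuple $\sigma^{\bm{\beta}}a_1,\ldots,\sigma^{\bm{\beta}}a_n$) to guarantee some $C_{\bm{\beta}}\neq 0$, then kill $S-\tau(S)$ with the operators $D_i$ and invoke Lemma~\ref{lem:operator}. The conclusion you draw---that $\tau$ permutes the set $\{\sigma^{\bm{\beta}}a_1\mid\bm{\beta}\in L\}$---is exactly what the paper establishes in its Step~2.

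Your Step~3, however, has a genuine gap. You propose to show that the induced permutation $\phi_\tau$ of $L$ is the identity for \emph{every} $\tau\in\operatorname{Aut}(\overline{E}/F)$, and you suggest that iterating $\tau$ together with $2M$-nonperiodicity forces this. But nothing in the setup prevents $\phi_\tau$ from being a nontrivial permutation: the elements $\sigma^{\bm{\beta}}a_1$ for $\bm{\beta}\in L$ could genuinely be Galois conjugates over $F$, permuted nontrivially by some $\tau$. Iterating $\tau$ only tells you that $\phi_\tau$ has finite order (which is automatic for a permutation of a finite set) and gives no handle on forcing $\phi_\tau=\operatorname{id}$. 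The $2M$-nonperiodicity does not help here either, since all the shifts in play already lie in $\mathbb{Z}_{\geqslant 0}^t(M)$.

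The paper's Step~3 takes a completely different route and does \emph{not} attempt to prove $\phi_\tau=\operatorname{id}$. Instead, Step~2 shows that $\sigma^{\bm{\beta}_0}a_1$ is algebraic over $F$ for any $\bm{\beta}_0\in\Lambda_0$, with minimal polynomial $P\in F[t]$ whose roots form a subset $\{\sigma^{\bm{\beta}}a_1\mid\bm{\beta}\in\Lambda_P\}$ for some $\Lambda_P\subset\Lambda_0$. The key idea is then to exploit that $F$ is $\sigma$-closed: letting $\bm{\beta}_1,\bm{\beta}_2$ be the lex-minimal and lex-maximal elements of $\Lambda_P$, the shifted polynomial $Q:=\sigma^{\bm{\beta}_2-\bm{\beta}_1}P$ again lies in $F[t]$, and a short lex argument shows $\Lambda_P\cap(\bm{\beta}_2-\bm{\beta}_1+\Lambda_P)=\{\bm{\beta}_2\}$. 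Now $2M$-nonperiodicity is used (and is genuinely needed, since elements of $\bm{\beta}_2-\bm{\beta}_1+\Lambda_P$ can have weight up to $2M$) to conclude that $P$ and $Q$ share exactly one root, namely $\sigma^{\bm{\beta}_2}a_1$. Hence $\gcd(P,Q)\in F[t]$ has a unique root, so $\sigma^{\bm{\beta}_2}a_1\in F$, and applying $\sigma^{-\bm{\beta}_2}$ gives $a_1\in F$.
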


\begin{proof}
  We are given that there exist $c_{\bm{\alpha}, \bm{\beta}} \in F$ not all zero such that
  \begin{equation}\label{eq:dependence}
    \sum\limits_{\substack{\bm{\alpha} \in \mathbb{Z}_{\geqslant 0}^s(M),\\  \bm{\beta} \in \mathbb{Z}_{\geqslant 0}^t(M)}} c_{\bm{\alpha}, \bm{\beta}} [\delta^{\bm{\alpha}}\sigma^{\bm{\beta}} e^{a_1x_1 + \ldots + a_{n} x_{n}}]_{N} = \left[ \sum\limits_{\substack{\bm{\alpha} \in \mathbb{Z}_{\geqslant 0}^s(M),\\  \bm{\beta} \in \mathbb{Z}_{\geqslant 0}^t(M)}} c_{\bm{\alpha}, \bm{\beta}} \delta^{\bm{\alpha}}\sigma^{\bm{\beta}} e^{a_1x_1 + \ldots + a_{n} x_{n}} \right]_{N} = f \in F[x_1, \ldots, x_{n}].
  \end{equation}
  Collecting together the terms with the same exponential part, we can write
  \[
  \sum\limits_{\substack{\bm{\alpha} \in \mathbb{Z}_{\geqslant 0}^s(M),\\  \bm{\beta} \in \mathbb{Z}_{\geqslant 0}^t(M)}} c_{\bm{\alpha}, \bm{\beta}} \delta^{\bm{\alpha}}\sigma^{\bm{\beta}} e^{a_1x_1 + \ldots + a_{n} x_{n}} = \sum\limits_{\bm{\beta} \in \mathbb{Z}_{\geqslant 0}^t(M)} C_{\bm{\beta}} e^{(\sigma^{\bm{\beta}}a_1) x_1 + \ldots + (\sigma^{\bm{\beta}}a_{n}) x_{n}},
  \]
  where $C_{\bm{\beta}} \in E[x_1, \ldots, x_{n}]$.
  
  \begin{step}
  There exists $\bm{\beta} \in \mathbb{Z}_{\geqslant 0}^t(M)$ such that $C_{\bm{\beta}} \neq 0$.
  \end{step}
  \noindent
  Consider any $\bm{\beta}$ such that $C_{\bm{\beta}} = 0$.
  Since
  \[
  {\rank} J(a_1, \ldots, a_{n}) {= s} \implies {\rank} J(\sigma^{\bm{\beta}}a_1, \ldots, \sigma^{\bm{\beta}}a_{n}) {= s},
  \]
  {Lemma~\ref{lem:independence}} implies that $\{ \delta^{\bm{\alpha}} e^{(\sigma^{\bm{\beta}} a_1)x_1 + \ldots + (\sigma^{\bm{\beta}} a_{n}) x_{n}} \;|\; \bm{\alpha} \in \mathbb{Z}_{\geqslant 0}^s(M) \}$ are $E$-linearly independent.
  Thus, $C_{\bm{\beta}} = 0$ implies that, for every $\bm{\alpha} \in \mathbb{Z}_{\geqslant 0}^s(M)$, $c_{\bm{\alpha}, \bm{\beta}} = 0$.
  Thus, if $C_{\bm{\beta}} = 0$ for every $\bm{\beta} \in \mathbb{Z}_{\geqslant 0}^t(M)$, we arrive at contradiction with the fact that not all $c_{\bm{\alpha}, \bm{\beta}}$ are zeros.
  
  \begin{step}\label{step:2_general}
  Let $\overline{E}$ be an algebraic closure of $E$ (we do not assume that $\overline{E}$ has a structure of $\DS$-field).
  Let $\Lambda_0 := \{\bm{\beta} \;|\; C_{\bm{\beta}} \neq 0\} \subset \mathbb{Z}_{\geqslant 0}^t(M)$.
  For every field automorphism $\tau \colon \overline{E} \to \overline{E}$ such that $\tau|_F = \operatorname{id}$ and every $\bm{\beta}_1 \in \Lambda_0$, there exists $\bm{\beta}_2 \in \Lambda_0$ such that $\tau (\sigma^{\bm{\beta}_1} a_1) = \sigma^{\bm{\beta}_2} a_1$.
  \end{step}
  \noindent
  Let $\tau$ act on $\overline{E}[\![x_1, \ldots, x_{n}]\!]$ coefficient-wise.
  Applying $\tau$ to~\eqref{eq:dependence}, we obtain
  \[
  \left[ \sum\limits_{\bm{\beta} \in \mathbb{Z}_{\geqslant 0}^t(M)} C_{\bm{\beta}} e^{ (\sigma^{\bm{\beta}}a_1) x_1 + \ldots + (\sigma^{\bm{\beta}}a_{n}) x_{n}} \right]_{N} = f = \left[ \sum\limits_{\bm{\beta} \in \mathbb{Z}_{\geqslant 0}^t(M)} \tau(C_{\bm{\beta}}) e^{\tau(\sigma^{\bm{\beta}}a_1) x_1 + \ldots + \tau(\sigma^{\bm{\beta}}a_{n}) x_{n}} \right]_{N}
  \]
  Let 
  \[
    S := \sum\limits_{\bm{\beta} \in \mathbb{Z}_{\geqslant 0}^t(M)} C_{\bm{\beta}} e^{(\sigma^{\bm{\beta}}a_1) x_1 + \ldots + (\sigma^{\bm{\beta}}a_{n}) x_{n}} - \sum\limits_{\bm{\beta} \in \mathbb{Z}_{\geqslant 0}^t(M)} \tau(C_{\bm{\beta}}) e^{\tau(\sigma^{\bm{\beta}}a_1) x_1 + \ldots + \tau(\sigma^{\bm{\beta}}a_{n}) x_{n}}.
  \]
  Since, for every $\bm{\beta} \in \mathbb{Z}_{\geqslant 0}^t(M)$, the total degree of $C_{\bm{\beta}}$ does not exceed $M$, we have
  \[
  D_i S = 0, \text{ where } D_i := \prod\limits_{\bm{\beta} \in \mathbb{Z}_{\geqslant 0}^t(M)} (\partial_i - \sigma^{\bm{\beta}} a_i)^{M + 1} \cdot \prod\limits_{\bm{\beta} \in \mathbb{Z}_{\geqslant 0}^t(M)} (\partial_i - \tau(\sigma^{\bm{\beta}} a_i))^{M + 1}
  \]
  for every $1 \leqslant i \leqslant {n}$. 
  Since the order of $D_i$ is does not exceed $2(M + 1) \cdot |\mathbb{Z}_{\geqslant 0}^t(M)| \leqslant 2(M + 1)^{t + 1} = N$ and $[S]_{N} = 0$, Lemma~\ref{lem:operator} implies that $S = 0$.
  
  Since $a_1$ is $2M$-nonperiodic, the set $\{ \sigma^{\bm{\beta}}a_1 \;|\; \bm{\beta} \in \Lambda_0 \}$ contains $|\Lambda_0|$ distinct elements.
  If the number of distinct elements in the set 
  \[
  \{ \sigma^{\bm{\beta}}a_1 \;|\; \bm{\beta} \in \Lambda_0 \} \cup \{ \tau(\sigma^{\bm{\beta}}a_1) \;|\; \bm{\beta} \in \Lambda_0 \}
  \]
  is greater than $|\Lambda_0|$, then there is $\bm{\beta}_{0} \in \Lambda_0$ such that
 \[
    \tau\sigma^{\bm{\beta}_0} a_1 \not\in \{ \sigma^{\bm{\beta}}a_1 \;|\; \bm{\beta} \in \Lambda_0 \} \cup \{ \tau(\sigma^{\bm{\beta}}a_1) \;|\; \bm{\beta} \in \Lambda_0, \; \bm{\beta} \neq \bm{\beta}_0 \}.
  \]
  Then the equation $S = 0$ implies that the exponential power series $e^{\tau(\sigma^{\bm{\beta}_0} a_1) x_1 + \ldots + \tau(\sigma^{\bm{\beta}_0} a_{n}) x_{n}}$ can be written as a $E(x_1, \ldots, x_{n})$-linear combination of exponential power series with the exponents different from $\tau(\sigma^{\bm{\beta}_0} a_1) x_1 + \ldots + \tau(\sigma^{\bm{\beta}_0} a_{n}) x_{n}$, and this is impossible.
  Thus, for every $\bm{\beta}_0 \in \Lambda_0$, $\tau(\sigma^{\bm{\beta}_0} a_1) \in \{ \sigma^{\bm{\beta}}a_1 \;|\; \bm{\beta} \in \Lambda_0 \}$.
  
  \begin{step}
  $a_1 \in F$.
  \end{step}
  \noindent
  Consider $\bm{\beta}_0 \in \Lambda_0$.
  Step~\ref{step:2_general} implies that every element conjugate to $\sigma^{\bm{\beta}_0} a_1$ in $\overline{E}$ over $F$ is of the form $\sigma^{\bm{\beta}} a_1$, where $\bm{\beta} \in \Lambda_0$.
  In particular, $\sigma^{\bm{\beta}_0} a_1$ is algebraic over $F$. 
  Consider the minimal polynomial $P(t) \in F[t]$ for $\sigma^{\bm{\beta}_0} a_1$ over $F$.
  The roots of $P(t)$ form a subset in $\{ \sigma^{\bm{\beta}}a_1 \mid \bm{\beta} \in \Lambda_0 \}$.
  We define $\Lambda_P := \{ \bm{\beta} \in \Lambda_0 \mid P(\sigma^{\bm{\beta}}a_1) = 0\}$.
  Let $\bm{\beta}_1$ and $\bm{\beta}_2$ be the smallest and the largest elements of $\Lambda_P$ with respect to the lexicographic ordering, respectively.
  Let
  \[
  Q(t) := \sigma^{\bm{\beta}_2 - \bm{\beta}_1}P(t) \in F[t].
  \]
  Then the set of roots of $Q$ in $\overline{E}$ is exactly $\{ \sigma^{\bm{\beta}} a_1 \mid \bm{\beta} \in \bm{\beta}_2 - \bm{\beta}_1 + \Lambda_P\}$.
  We will show that 
  \begin{equation}\label{eq:intersection}
  \Lambda_P \cap (\bm{\beta}_2 - \bm{\beta}_1 + \Lambda_P) = \{\bm{\beta}_2\}.
  \end{equation}
  Assume that there is an element $\bm{\beta}_3$ in the intersection such that $\bm{\beta}_3 \neq \bm{\beta}_2$.
  Then $\bm{\beta}_3 - \bm{\beta}_2 + \bm{\beta}_1 \in \Lambda_P$.
  The maximality of $\bm{\beta}_2$ implies that $\bm{\beta}_3 - \bm{\beta}_2 <_{lex} \bm{0}$.
  Then $\bm{\beta}_3 - \bm{\beta}_2 + \bm{\beta}_1 <_{lex} \bm{\beta}_1$, and this contradicts the minimality of $\bm{\beta}_1$ and proves~\eqref{eq:intersection}.
  
  Consider any common root of $P$ and $Q$.
  This root can be written as $\sigma^{\bm{\beta}_3} a_1$ where $\bm{\beta}_3 \in \Lambda_P$ and as $\sigma^{\bm{\beta}_4} a_1$ where $\bm{\beta}_4 \in \bm{\beta}_2 - \bm{\beta}_1 + \Lambda_P$.
  Then $\sigma^{\bm{\beta}_3 + \bm{\beta}_1} a_1 = \sigma^{\bm{\beta}_4 + \bm{\beta}_1} a_1$.
  Since $|\bm{\beta}_3 + \bm{\beta}_1| \leqslant 2M$, $|\bm{\beta}_4 + \bm{\beta}_1| \leqslant 2M$, and the $a_1$ is $2M$-nonperiodic, we obtain $\bm{\beta}_3 = \bm{\beta}_4$.
  Using~\eqref{eq:intersection}, we see that $\bm{\beta}_2 = \bm{\beta}_3 = \bm{\beta}_4$, so the only common root of $P$ and $Q$ is $\sigma^{\bm{\beta}_2} a_1$.
  Then $\sigma^{\bm{\beta}_2} a_1$ is the only root of $\gcd(P, Q) \in F[t]$, so $\sigma^{\bm{\beta}_2} a_1 \in F$.
  Applying $\sigma^{- \bm{\beta}_2}$, we obtain $a_1 \in F$.
\end{proof}

\subsection{Proof of Theorem~\ref{thm:main}}

\begin{proof}[Proof of Theorem~\ref{thm:main}]
  Lemma~\ref{lem:nondegenerate_field_has_nonzero_Jac} implies that there are elements $c_1, \ldots, c_s \in E$ such that $\det J(c_1, \ldots, c_s) \neq 0$.
  Adding these elements to the set $a_1, \ldots, a_n$ of generators of $E$ over $F$ if necessary, we will further assume that ${\rank} J(a_1, \ldots, a_{n}) {= s}$.
  
  For $f_1, \ldots, f_r \in E$ and a positive integer $m$, we introduce
  \[
  \trdeg (f_1, \ldots, f_r; m) := \trdeg_F F\left( \delta^{\bm{\alpha}}\sigma^{\bm{\beta}}f_i \mid 1 \leqslant i \leqslant r,\, \bm{\alpha} \in \mathbb{Z}_{\geqslant 0}^s(m),\, \bm{\beta} \in \mathbb{Z}_{\geqslant 0}^t(m) \right).
  \]
  Since every element of $E$ is $\DS$-algebraic over $F$, \cite[{Theorem~2.1}]{Levin} implies that there exists a polynomial $p(z) \in \mathbb{Q}[z]$ of degree less than $s + t$ such that 
  \[
    \trdeg(a_1, \ldots, a_n; m) < p(m) \quad\text{ for every } m \in \mathbb{Z}_{\geqslant 0}.
  \]
  Since
  \[
  \left\lvert\{ (\bm{\alpha}, \bm{\beta}) \mid \bm{\alpha} \in \mathbb{Z}_{\geqslant 0}^s(m),\, \bm{\beta} \in \mathbb{Z}_{\geqslant 0}^t(m) \}\right\rvert = \binom{m + s}{s} \binom{m + t}{t}
  \]
  and $\binom{m + s}{s} \binom{m + t}{t}$ is a polynomial of degree $s + t$ in $m$, there exists $M \in \mathbb{Z}_{\geqslant 0}$ such that 
  \[
  \trdeg(a_1, \ldots, a_n; M + n) = p(M + n) < \left\lvert\{ (\bm{\alpha}, \bm{\beta}) \mid \bm{\alpha} \in \mathbb{Z}_{\geqslant 0}^s(M),\, \bm{\beta} \in \mathbb{Z}_{\geqslant 0}^t(M)\}\right\rvert.
  \]
  We will prove by induction on $\ell$ that, for every $0 \leqslant \ell \leqslant n$, there exists $b_\ell \in E$ such that
  \begin{enumerate}[label=\textbf{(R\arabic*)}]
      \item\label{cond:nonperiodic} $b_\ell$ is $2M$-nonperiodic;
      \item\label{cond:Jac} $\det J(\delta_1 b_\ell, \ldots, \delta_s b_\ell) \neq 0$;
      \item\label{cond:generator} $E = F\langle b_\ell, a_{\ell + 1}, \ldots, a_n\rangle$;
      \item\label{cond:trdeg} $\trdeg(b_\ell, a_{\ell + 1}, \ldots, a_n; M + n - \ell) < D_M :=  \left\lvert\{ (\bm{\alpha}, \bm{\beta}) \mid \bm{\alpha} \in \mathbb{Z}_{\geqslant 0}^s(M),\, \bm{\beta} \in \mathbb{Z}_{\geqslant 0}^t(M)\}\right\rvert$.
  \end{enumerate}
  Since $E = F\langle b_n\rangle$, proving the existence of such $b_0, \ldots, b_n$ will prove the theorem.
  
  Consider the base case $\ell = 0$.
  Lemma~\ref{lem:existence_nonperiodic_and_nonzeroJac}  implies that there exists a polynomial $P \in F[x_1, \ldots, x_n]$ such that $b_0 := P(a_1, \ldots, a_n)$ is $2M$-nonperiodic and $\det J(\delta_1 b_0, \ldots, \delta_s b_0) \neq 0$.
  Thus, $b_0$ satisfies~\ref{cond:nonperiodic} and~\ref{cond:Jac}.
  \ref{cond:generator} is trivially satisfied.
  Finally, since $b_0 \in F(a_1, \ldots, a_n)$, we have
  \[
  \trdeg(b_0, a_1, \ldots, a_n; M + n) = \trdeg(a_1, \ldots, a_n; M + n),
  \]
  so~\ref{cond:trdeg} also holds.
  
  Assume that we have constructed $b_\ell$ for some $\ell \geqslant 0$.
  We set $N := 2(M + 1)^{t + 1}$ (as in Lemma~\ref{lem:core}) and $\Gamma := \{0, \ldots, N\}^{s + 1} \subset \mathbb{Z}^{s + 1}$, introduce a set variables 
  \[
    \Theta := \{ \theta_{\bm{\gamma}} \;|\; \bm{\gamma} \in \Gamma \} \cup \{\theta_{-1}\},
  \]
  and extend the action of $\Delta \cup \Sigma$ from $E$ to $E(\Theta)$ by making all elements of $\Theta$ to be $\DS$-constants.
  Let 
  \begin{equation}\label{eq:primitive_general}
  B_{\ell + 1} := \theta_{-1}a_{\ell + 1} + \sum\limits_{\bm{\gamma} \in \Gamma} \frac{\theta_{\bm{\gamma}}}{\bm{\gamma}!} b_\ell^{\gamma_{0}} (\delta_{1} b_\ell)^{\gamma_{1}} \ldots (\delta_s b_\ell)^{\gamma_s}.
  \end{equation}
  We regard any point $\varphi \in \mathbb{Q}^{|\Theta|}$ as a function $\varphi\colon \Theta \to \mathbb{Q}$ and extend it to a $E$-algebra homomorphism $\varphi\colon E[\Theta] \to E$.
  
  \begin{claim}
  There exists a Zariski open nonempty subset $U_1  \subset \mathbb{Q}^{|\Theta|}$ such that 
  \[
  F\langle \varphi(B_{\ell + 1}) \rangle = F\langle b_{\ell}, a_{\ell + 1} \rangle \quad \text{ for every }\quad \varphi \in U_1.
  \]
  \end{claim}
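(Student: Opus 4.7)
The plan is to mirror the differential-case argument from Section~\ref{sec:differential_proof}, now invoking Lemma~\ref{lem:core} in place of Lemma~\ref{lem:core_differential}. First I would exhibit a nontrivial algebraic relation among the $\DS$-derivatives of $B_{\ell + 1}$. Since applying $\delta^{\bm{\alpha}}\sigma^{\bm{\beta}}$ with $|\bm{\alpha}|, |\bm{\beta}| \leqslant M$ to~\eqref{eq:primitive_general} brings in only $\DS$-derivatives of $b_\ell$ and $a_{\ell + 1}$ of total order at most $M + 1 \leqslant M + n - \ell$, property~\ref{cond:trdeg} gives
\[
\trdeg_{F(\Theta)} F(\Theta)\bigl( \delta^{\bm{\alpha}}\sigma^{\bm{\beta}} B_{\ell + 1} \;\big|\; \bm{\alpha} \in \mathbb{Z}_{\geqslant 0}^s(M),\; \bm{\beta} \in \mathbb{Z}_{\geqslant 0}^t(M) \bigr) < D_M.
\]
I would then pick an annihilating polynomial $R$ of minimal total degree, and write $R_{\bm{\alpha}, \bm{\beta}}$ and $R_{\theta_{\bm{\gamma}}}$ for the values of its partial derivatives with respect to $\delta^{\bm{\alpha}}\sigma^{\bm{\beta}} z$ and $\theta_{\bm{\gamma}}$ evaluated at $B_{\ell + 1}$; minimality ensures that not all $R_{\bm{\alpha}, \bm{\beta}}$ vanish.

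The central step is a multivariate generating-series trick. For each $\bm{\gamma} \in \Gamma$, differentiating $R(B_{\ell + 1}) = 0$ with respect to $\theta_{\bm{\gamma}}$ and using
\[
\frac{\partial\, \delta^{\bm{\alpha}}\sigma^{\bm{\beta}} B_{\ell + 1}}{\partial \theta_{\bm{\gamma}}} = \frac{1}{\bm{\gamma}!}\, \delta^{\bm{\alpha}}\sigma^{\bm{\beta}}\bigl( b_\ell^{\gamma_0} (\delta_1 b_\ell)^{\gamma_1} \cdots (\delta_s b_\ell)^{\gamma_s} \bigr)
\]
gives one linear relation per $\bm{\gamma}$. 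Multiplying the $\bm{\gamma}$-th relation by $x_0^{\gamma_0} x_1^{\gamma_1} \cdots x_s^{\gamma_s}$ and summing over $\bm{\gamma} \in \Gamma = \{0, \ldots, N\}^{s + 1}$ should yield the consolidated identity
\[
\sum\limits_{\bm{\alpha}, \bm{\beta}} R_{\bm{\alpha}, \bm{\beta}} \cdot \delta^{\bm{\alpha}}\sigma^{\bm{\beta}} \bigl[ e^{x_0 b_\ell + x_1 \delta_1 b_\ell + \ldots + x_s \delta_s b_\ell} \bigr]_{N} = -\sum\limits_{\bm{\gamma} \in \Gamma} R_{\theta_{\bm{\gamma}}}\, x^{\bm{\gamma}},
\]
which is exactly the nontrivial $F\langle \Theta, B_{\ell + 1} \rangle$-linear dependence demanded by the hypothesis of Lemma~\ref{lem:core}. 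I expect this bookkeeping step to be the main technical obstacle: one must check that each $\delta_i$ and $\sigma_j$ commutes with coordinate-wise truncation, and that the box $\Gamma = \{0, \ldots, N\}^{s + 1}$ is chosen precisely so that the resulting finite sum agrees with the truncation $[\,\cdot\,]_N$ in Lemma~\ref{lem:core}.

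Finally, I would apply Lemma~\ref{lem:core} with base field $F\langle \Theta, B_{\ell + 1} \rangle$ and generator tuple $(b_\ell, \delta_1 b_\ell, \ldots, \delta_s b_\ell)$: the Jacobian has rank $s$ by~\ref{cond:Jac}, and $b_\ell$ is $2M$-nonperiodic by~\ref{cond:nonperiodic}. The lemma yields $b_\ell \in F\langle \Theta, B_{\ell + 1} \rangle$, so there exist nonzero $\DS$-polynomials $P_1, P_2 \in F[\Theta]\{z\}$ with $b_\ell = P_1(B_{\ell + 1}) / P_2(B_{\ell + 1})$. I would then set
\[
U_1 := \bigl\{ \varphi \in \mathbb{Q}^{|\Theta|} \;\big|\; \varphi(P_2(B_{\ell + 1})) \neq 0 \text{ and } \varphi(\theta_{-1}) \neq 0 \bigr\},
\]
which is a nonempty Zariski open subset because $P_2$ is a nonzero polynomial and $\theta_{-1}$ is a single coordinate. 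For $\varphi \in U_1$, specialization gives $b_\ell \in F\langle \varphi(B_{\ell + 1}) \rangle$, and then $a_{\ell + 1}$ is recovered from~\eqref{eq:primitive_general} by solving for the $\theta_{-1} a_{\ell + 1}$ summand, which is legitimate since $\varphi(\theta_{-1}) \neq 0$. The reverse inclusion $F\langle \varphi(B_{\ell + 1}) \rangle \subseteq F\langle b_\ell, a_{\ell + 1} \rangle$ is immediate from~\eqref{eq:primitive_general}, which completes the claim.
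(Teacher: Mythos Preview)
Your proposal is correct and follows essentially the same route as the paper: bound the transcendence degree of $\{\delta^{\bm{\alpha}}\sigma^{\bm{\beta}} B_{\ell+1}\}$ by $D_M$ via~\ref{cond:trdeg}, take a minimal annihilating polynomial $R$, differentiate in each $\theta_{\bm{\gamma}}$, assemble the resulting identities into~\eqref{eq:with_exponents} via the generating-series trick, apply Lemma~\ref{lem:core} with base field $F\langle\Theta, B_{\ell+1}\rangle$ and tuple $(b_\ell,\delta_1 b_\ell,\ldots,\delta_s b_\ell)$, and then define $U_1$ exactly as you do. The only cosmetic difference is that the paper places the truncation outside $\delta^{\bm{\alpha}}\sigma^{\bm{\beta}}$ rather than inside, but since the operators act coefficient-wise on $E[\![x_0,\ldots,x_s]\!]$ this is the commutation you already flagged, and the transcendence-degree chain in the paper routes through $\trdeg(a_{\ell+1},b_\ell;M+1)$ rather than directly invoking $M+1\leqslant M+n-\ell$, which amounts to the same bound.
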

  \noindent
  Since
  \begin{align*}
  &\trdeg_{F(\Theta)} F(\Theta, \{ \delta^{\bm{\alpha}}\sigma^{\bm{\beta}} B_{\ell + 1} \;|\; \bm{\alpha} \in \mathbb{Z}_{\geqslant 0}^s(M),\, \bm{\beta} \in \mathbb{Z}_{\geqslant 0}^t(M) \} ) \leqslant \trdeg(a_{\ell + 1}, b_\ell, \delta_2 b_\ell, \ldots, \delta_s b_\ell; M) \leqslant \\
  &\leqslant \trdeg(a_{\ell + 1}, b_\ell; M + 1) \leqslant \trdeg(b_\ell, a_{\ell + 1}, \ldots, a_n; M + 1) < D_M,
  \end{align*}
  the elements of $\{ \delta^{\bm{\alpha}}\sigma^{\bm{\beta}} B_{\ell + 1} \;|\; \bm{\alpha} \in \mathbb{Z}_{\geqslant 0}^s(M),\, \bm{\beta} \in \mathbb{Z}_{\geqslant 0}^t(M) \}$ are algebraically dependent over $F(\Theta)$.
  Thus, there exists a $\DS$-polynomial $R \in F(\Theta)[ \delta^{\bm{\alpha}} \sigma^{\bm{\beta}} z \mid \bm{\alpha} \in \mathbb{Z}_{\geqslant 0}^s(M),\, \bm{\beta} \in \mathbb{Z}_{\geqslant 0}^t(M) ]$ such that $R(B_{\ell + 1}) = 0$.
  We will assume that $R$ is chosen to be of the minimal possible total degree.
  We introduce
  \[
  R_{\bm{\alpha}, \bm{\beta}} := \frac{\partial R}{\partial \delta^{\bm{\alpha}} \sigma^{\bm{\beta}} z} (B_{\ell + 1}) \text{ for } \bm{\alpha} \in \mathbb{Z}_{\geqslant 0}^s(M),\, \bm{\beta} \in \mathbb{Z}_{\geqslant 0}^t(M) \quad\text{ and }\quad R_{\theta_{\bm{\gamma}}} := \frac{\partial R}{\partial \theta_{\bm{\gamma}}}(B_{\ell + 1}) \text{ for } \bm{\gamma} \in \Gamma.
  \]
  The minimality of the degree of $R$ implies that not all of $R_{\bm{\alpha}, \bm{\beta}}$ are zero.
  Consider any $\bm{\gamma} \in \Gamma$.
  Differentiating $R(B_\ell) = 0$ with respect to $\theta_{\bm{\gamma}}$, we obtain
  \begin{equation}\label{eq:derivative}
  \sum\limits_{\substack{\bm{\alpha} \in \mathbb{Z}_{\geqslant 0}^s(M),\\ \bm{\beta} \in \mathbb{Z}_{\geqslant 0}^t(M)}} R_{\bm{\alpha}, \bm{\beta}} \delta^{\bm{\alpha}} \sigma^{\bm{\beta}} \left( \frac{{b_\ell^{\gamma_0}} (\delta_{1} b_\ell)^{\gamma_{1}} \ldots (\delta_s b_\ell)^{\gamma_s}}{\bm{\gamma}!} \right) = -R_{\theta_{\bm{\gamma}}}.
  \end{equation}
  We extend the action of $\Delta \cup \Sigma$ from $E(\Theta)$ to $E(\Theta)[\![x_1, \ldots, x_s]\!]$ as in~\eqref{eq:extend_der_gen}.
  We multiply~\eqref{eq:derivative} by $x_{0}^{\gamma_{0}}\ldots x_s^{\gamma_s}$ and sum such equations over all $\bm{\gamma} \in \Gamma$.
  We obtain
  \begin{equation}\label{eq:with_exponents}
    \sum\limits_{\substack{\bm{\alpha} \in \mathbb{Z}_{\geqslant 0}^s(M),\\ \bm{\beta} \in \mathbb{Z}_{\geqslant 0}^t(M)}} R_{\bm{\alpha}, \bm{\beta}} \left[ \delta^{\bm{\alpha}} \sigma^{\bm{\beta}} e^{b_\ell x_{0} + (\delta_{1} b_\ell) x_{1} + \ldots + (\delta_s b_\ell) x_s} \right]_{N} = \sum\limits_{\bm{\gamma} \in {\Gamma}} -R_{\theta_{\bm{\gamma}}}x_{0}^{\gamma_{0}} \ldots x_s^{\gamma_s}.
  \end{equation}
  We apply Lemma~\ref{lem:core} to~\eqref{eq:with_exponents} with $a_1 = b_\ell, a_2 = \delta_{1} b_\ell, \ldots, a_{s + 1} = \delta_s b_\ell$ and $F = F\langle \Theta, B_{\ell + 1} \rangle$, and deduce that $b_\ell \in F\langle \Theta, B_{\ell + 1} \rangle$.
  Then there exist nonzero $\DS$ polynomials $P_1, P_2 \in F[\Theta]\{z\}$ such that 
  \begin{equation}\label{eq:formula_b_ell}
  b_\ell = \frac{P_1(B_{\ell + 1})}{P_2(B_{\ell + 1})}.
  \end{equation}
  We define $U_1 := \{\varphi \in \mathbb{Q}^{|\Theta|} \;|\; \varphi(P_2(B_{\ell + 1})) \neq 0 \text{ and } \varphi(\theta_{-1}) \neq 0\}$.
  Since $P_2$ is a nonzero polynomial, $U_1$ is nonempty.
  Consider  $\varphi \in U_1$. Then \eqref{eq:formula_b_ell} implies that $b_\ell \in F\langle \varphi(B_{\ell+ 1}) \rangle$.
  Since $\varphi(\theta_{-1}) \neq 0$, $a_{\ell + 1} \in F\langle \varphi(B_{\ell+ 1}) \rangle$, so $F\langle \varphi(B_{\ell + 1}) \rangle = F\langle b_{\ell}, a_{\ell + 1} \rangle$.
  The claim is proved.
  
  \begin{claim}
  Let
  \[
  U_2 := \{ \varphi \in \mathbb{Q}^{|\Theta|} \;|\; \det J(\delta_1 \varphi(B_{\ell + 1}), \ldots, \delta_s \varphi(B_{\ell + 1})) \neq 0 \text{ and } \varphi(B_{\ell + 1}) \text{ is $2M$-nonperiodic}\}.
  \]
  Then $U_2$ is a nonempty Zariski open set.
  \end{claim}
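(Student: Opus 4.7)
The plan is to mirror the strategy used in the analogous claim in the differential case (after~\eqref{eq:primitive_differential}): show that $U_2$ is Zariski open by expressing both defining conditions as polynomial inequations in the entries of $\varphi$, and then exhibit an explicit $\varphi_0 \in U_2$ by reducing to a specialization in which $\varphi_0(B_{\ell+1}) = b_\ell$, so that the required properties follow from the inductive hypotheses~\ref{cond:nonperiodic} and~\ref{cond:Jac}.

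For openness, note that $B_{\ell+1}$ is a $\Theta$-linear expression with coefficients in $E$, so for each fixed $\bm{\alpha} \in \mathbb{Z}_{\geqslant 0}^s$ and $\bm{\beta} \in \mathbb{Z}^t$, the element $\delta^{\bm{\alpha}}\sigma^{\bm{\beta}} \varphi(B_{\ell+1})$ is a $\mathbb{Q}$-linear combination of the coordinates of $\varphi$ with coefficients in $E$. Hence $\det J(\delta_1 \varphi(B_{\ell+1}), \ldots, \delta_s \varphi(B_{\ell+1}))$ is a polynomial in $\varphi$, and its nonvanishing defines a Zariski open subset of $\mathbb{Q}^{|\Theta|}$. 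The $2M$-nonperiodicity of $\varphi(B_{\ell+1})$ is, by Definition~\ref{def:nonperiodic}, the conjunction of the finitely many inequations $\sigma^{\bm{\alpha}}\varphi(B_{\ell+1}) - \sigma^{\bm{\beta}}\varphi(B_{\ell+1}) \neq 0$ over the pairs $\bm{\alpha} \neq \bm{\beta}$ in $\mathbb{Z}_{\geqslant 0}^t(2M)$, each of which is again a polynomial inequation in $\varphi$. So $U_2$ is an intersection of finitely many Zariski open sets and is therefore open.

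For nonemptiness, consider the point $\varphi_0 \in \mathbb{Q}^{|\Theta|}$ defined by $\varphi_0(\theta_{(1,0,\ldots,0)}) = 1$ and $\varphi_0(\theta_{\bm{\gamma}}) = 0$ for all other $\bm{\gamma} \in \Gamma$, as well as $\varphi_0(\theta_{-1}) = 0$. Then in the sum~\eqref{eq:primitive_general} only the term corresponding to $\bm{\gamma} = (1, 0, \ldots, 0)$ survives, and $\varphi_0(B_{\ell+1}) = b_\ell$. By the inductive hypothesis, $b_\ell$ satisfies~\ref{cond:nonperiodic} and~\ref{cond:Jac}, that is, $b_\ell$ is $2M$-nonperiodic and $\det J(\delta_1 b_\ell, \ldots, \delta_s b_\ell) \neq 0$, so $\varphi_0 \in U_2$. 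No serious obstacle is anticipated; the argument is a direct generalization of the nonemptiness argument for $U_2$ in the proof of the differential case, with the only new ingredient being the extra $2M$-nonperiodicity constraints, which are automatically cut out by finitely many inequations and are satisfied at $\varphi_0$ precisely because $b_\ell$ was already chosen $2M$-nonperiodic.
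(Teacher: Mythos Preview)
Your proposal is correct and follows essentially the same approach as the paper: both argue openness by observing that the Jacobian condition and the $2M$-nonperiodicity condition are each cut out by finitely many polynomial inequations in the coordinates of $\varphi$, and both establish nonemptiness via the specialization $\varphi_0$ with $\varphi_0(\theta_{(1,0,\ldots,0)})=1$ and all other values zero, so that $\varphi_0(B_{\ell+1})=b_\ell$ and the inductive hypotheses~\ref{cond:nonperiodic} and~\ref{cond:Jac} apply.
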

  \noindent
  The fact that $\varphi(B_{\ell + 1})$ is $2M$-nonperiodic can be expressed by a system of inequations as in the proof of Lemma~\ref{lem:existence_nonperiodic_and_nonzeroJac}.
  Thus, $U_2$ is defined by a system of inequations, so it is open.
  Consider $\varphi_0 \in \mathbb{Q}^{|\Theta|}$ defined by
  \[
  \varphi_0(\theta_{\bm{\gamma}}) = \begin{cases} 1, \text{ if } \bm{\gamma} = (1, 0, \ldots, 0),\\
  0, \text{ otherwise},\end{cases} \quad \varphi_0(\theta_{-1}) = 0.
  \]
  Then $\varphi_0(B_{\ell + 1}) = b_\ell$.
  We have $\det J(\delta_1 b_\ell, \ldots, \delta_s b_\ell) \neq 0$ due to~\ref{cond:Jac} and $b_\ell$ is $2M$-nonperiodic due to~\ref{cond:nonperiodic}.
  Thus, $\varphi_0 \in U_2$, so $U_2 \neq \varnothing$.
  The claim is proved.
  
  Consider $\varphi \in U_1 \cap U_2$ and define $b_{\ell + 1} := \varphi(B_{\ell + 1})$.
  Then~\ref{cond:generator} holds because $\varphi \in U_1$, \ref{cond:nonperiodic} and~\ref{cond:Jac} hold because $\varphi \in U_2$.
  Since $b_{\ell + 1} \in F(a_{\ell + 1}, b_\ell, \delta_1b_\ell, \ldots, \delta_s b_\ell)$, we have
  \[
  \trdeg(b_{\ell + 1}, a_{\ell + 2}, \ldots, a_n; M + n - \ell - 1) \leqslant \trdeg(b_\ell, a_{\ell + 1}, \ldots, a_n; M + n - \ell) < D_M.
  \]
  This proves~\ref{cond:trdeg} for $b_{\ell + 1}$ and finishes the proof of the existence of $b_0, \ldots, b_n$ such that \ref{cond:nonperiodic}, \ref{cond:Jac}, \ref{cond:generator}, and~\ref{cond:trdeg} hold.
\end{proof}

\section*{Acknowldegements}

The author is grateful to Lei Fu, Alexey Ovchinnikov, Thomas Scanlon, and the referee for their suggestions and helpful discussions.
This work has been partially supported by NSF grants CCF-1564132, CCF-1563942, DMS-1853482, DMS-1853650, and DMS-1760448, by
PSC-CUNY grants \#69827-0047 and \#60098-0048.

\bibliographystyle{abbrvnat}
\bibliography{bibdata}

\end{document}